\renewcommand\theequation{\thesection.\arabic{equation}}
\newcommand{\BA}{{\mathbb {A}}}
\newcommand{\BC}{{\mathbb {C}}}
\newcommand{\CF}{{\mathcal {F}}}
\newcommand{\CM}{{\mathcal {M}}}
\newcommand{\CP}{{\mathcal {P}}}
\newcommand{\CS}{{\mathcal {S}}}
\newcommand{\Fg}{{\mathfrak {g}}}
\newcommand{\Fu}{{\mathfrak {u}}}
\newcommand{\GL}{{\mathrm{GL}}}
\newcommand{\GSO}{{\mathrm{GSO}}}
\newcommand{\SL}{{\mathrm{SL}}}
\newcommand{\SO}{{\mathrm{SO}}}
\newcommand{\Sp}{{\mathrm{Sp}}}
\newcommand{\Spin}{{\mathrm{Spin}}}
\newcommand{\GSpin}{{\mathrm{GSpin}}}
\newcommand{\pair}[1]{\langle {#1} \rangle}
\newcommand{\back}{\backslash}
\newtheorem{thm}{Theorem}[section]
\newtheorem{lem}[thm]{Lemma}
\newtheorem{prop}[thm]{Proposition}
\newtheorem {conj}[thm]{Conjecture}
\newtheorem {ques/conj}[thm]{Question/Conjecture}
\newtheorem{defn}[thm]{Definition}
\newtheorem{rmk}[thm]{Remark}
\newcommand{\Rmnum}[1]{\expandafter\@slowromancap\romannumeral #1@}
\begin{document}
\renewcommand{\theequation}{\arabic{equation}}
\numberwithin{equation}{section}

\title{A relative trace formula identity for non-tempered spherical varieties}

\author{Chen Wan}
\address{Department of Mathematics \& Computer Science\\
Rutgers University–Newark\\
Newark, NJ 07102, USA}
\email{chen.wan@rutgers.edu}

\begin{abstract}
In this paper, motivated by some previous works in residue method and the recent theory of the relative Langlands duality, we prove a relative trace formula identity that compares the period integral of non-tempered spherical varieties with the period integral of a tempered spherical varieties associated to a Levi subgroup. This
allows us to incorporate numerous relative trace formula comparisons studied during the
last four decades under the relative Langlands duality framework. We will also propose a conjectural comparison for general non-tempered Hamiltonian spaces.
\end{abstract}

\subjclass[2020]{Primary 11F72}

\keywords{Relative Langlands Duality, Relative Trace Formula}

\maketitle

\section{Introduction}
Let $k$ be the number field, $\BA=\BA_k$ be the ring of adeles, and $G$ be a split connected reductive group defined over $k$. Let $B$ be a Borel subgroup of $G$, we say a subgroup $H$ of $G$ is spherical if $B$ admits an open orbit in $X=G/H$. If this is the case, we will say $X$ is a spherical variety of $G$. We say the spherical variety $X$ is reductive (resp. split) if $H$ is reductive (resp. split). Following the structure theory of spherical varieties, we use $P(X)$ to denote the stabilizer of the open Borel orbit in $X$. It is a parabolic subgroup of $G$ and we use $L(X)$ to denote its Levi subgroup. We say the spherical variety is tempered if $P(X)=B$.

One of the most important object in the relative Langlands program is to study the period integrals associated to spherical varieties \footnote{if $H$ has nontrivial intersection with the center $Z_G$ of $G$, then one also need to modulo $Z_G(\BA)\cap H(\BA)$ in the period integral} (here for any group $Y$ defined over $k$ we use $[Y]$ to denote $Y(k)\back Y(\BA)$)
\begin{equation}\label{defn period integral}
\CP_H(\phi):=\int_{[H]} \phi(h)dh
\end{equation}
where $\phi$ is an automorphic form on $G(\BA)$. The goal of this paper is to study the period integral for non-tempered split reductive spherical varieties. When the spherical variety $X$ is not tempered, by a theorem of Sakellaridis (Theorem 3 of \cite{P}), the period integral would be vanishing for all generic automorphic forms. Hence one needs to study the period for residue representations.

One way to study such a period is to compare the relative trace formula associated to this period with another relative trace formula associated to a tempered spherical variety $X_L=L/H_L$ of a certain Levi subgroup $L$ of $G$. The first example of such is due to Jacquet and Rallis \cite{JR} in which they studied the period integral for $\GL_{2n}/\Sp_{2n}$ by comparing it with the period integral of $\GL_n\times \GL_n/\GL_n$. Later their idea was adapted by Jiang, Mao, Rallis in a sequence of papers \cite{J}, \cite{JMR}, \cite{MR} for the models $(G_2,\SL_3),(\SO_7,G_2),(\SO_8,G_2)$ and $(\Sp_{4n},\Sp_{2n}\times \Sp_{2n})$.

In this paper, with the help of the theory of relative Langlands duality \cite{BSV}, we will give a conceptual explanation of those comparisons and we will extend it to all the split reductive spherical varieties. Moreover, we will also propose a conjectural comparison for general hyperspherical Hamiltonian spaces.

\subsection{The main results}
Let $X=G/H$ be a split reductive spherical variety. We are going to specify a Levi subgroup $L$ of $G$ as well as a tempered reductive spherical subgroup $H_L$ of $L$. If $X$ is tempered, then $(L,H_L)=(G,H)$. If $X$ is not tempered, in the following two tables we list all the non tempered spherical varieties as well as the corresponding $(L,H_L)$. In the next subsection, we will give a conceptual explanation of $(L,H_L)$ from the point of view of the relative Langlands duality. Note that $H_L$ is also a Levi subgroup of $H$. The first table contains all the non-tempered cases with $G$ not of Type $F$ and $E$, the second table contains the Type $F$ and $E$ cases. Here we have used the classification of spherical varieties in \cite{BP}.

\begin{figure}[h!]\leftskip-2cm
	\begin{tabular}{| c | c | c | c | c|}
		\hline
		\textnumero & $G$ & $H$ & $L$ & $H_L$ \\
		\hline
		 {1} & $\GL_{2a+2k}$  & $\GL_a\times \GL_{a+2k}$ & $\GL_{2a}\times \GL_{1}^{2k}$  & $\GL_a\times \GL_a\times \GL_{1}^{2k}$   \\
		\hline
         {2} & $\GL_{2a+2k+1}$ & $\GL_a\times \GL_{a+2k+1}$ & $\GL_{2a+1}\times \GL_{1}^{2k}$  & $\GL_{a+1}\times \GL_a\times \GL_{1}^{2k}$ \\
        \hline
         {3} & $\GL_{2n}$ & $\Sp_{2n}$ & $\GL_n\times \GL_n$  & $\GL_n$ \\
        \hline
         {4} & $\Sp_{4m+2k}$ & $\Sp_{2m}\times \Sp_{2m+2k}$ & $\GL_{2m}\times \GL_{1}^{k}$ & $\GL_{m}^{2}\times \GL_{1}^{k}$ \\
        \hline
         {5} & $\Sp_{2n}$ & $\Sp_{2n-2}\times \GL_1$ & $\Sp_4\times \GL_{1}^{n-2}$ & $\Sp_2\times \GL_{1}^{n-1}$ \\
        \hline
         {6} & $\SO_{2m+2k+1}$ & $\SO_{m}\times \SO_{m+2k+1}$ &  $\SO_{2m+1}\times \GL_{1}^{k}$ & $\SO_m\times \SO_{m+1}\times \GL_{1}^{k}$ \\
        \hline
         {7} & $\SO_{2m+2k}$ & $\SO_{m}\times \SO_{m+2k}$ & $\SO_{2m+2}\times \GL_{1}^{k-1}$ & $\SO_{m}\times \SO_{m+2}\times \GL_{1}^{k-1}$ \\
        \hline
         {8} & $\SO_{4n}$ & $\GL_{2n}$ & $\GL_{2n}$ & $\GL_n\times \GL_n$ \\
        \hline
         {9} & $\SO_{4n+2}$ & $\GL_{2n+1}$ & $\GL_{2n}\times \GL_1$ & $\GL_{n}^{2}\times \GL_1$ \\
        \hline
         {10} & $\SO_7$ & $G_2$ &  $\GL_2\times \SO_3$ & $\GL_2$ \\
        \hline
         {11} & $\SO_8$ & $G_2$ & $\GL_2\times \SO_4$ & $\GL_2$ \\
        \hline
         {12} & $G_2$ & $\SL_3$ & $\GL_2$ & $\GL_1\times \GL_1$ \\
        \hline
         {13} & $\GSO_{10}$ & $\GSpin_7$ & $\GSO_6\times \GL_2$ & $\GL_2\times \GL_2$ \\
        \hline
         {14} & $\SO_9$ & $\Spin_7$ & $\SO_5\times \GL_2$ & $\Spin_3\times \GL_2$ \\
        \hline

         {15} & $\GL_{2n+2}\times \GL_2$ & $\GL_{2n}\times \GL_2$ & $\GL_4\times \GL_{1}^{2n-2}\times \GL_2$ & $\GL_2\times \GL_2\times \GL_{1}^{2n-2}$ \\
        \hline
         {16} & $\GL_{2n+1}\times \GL_2$ & $\GL_{2n-1}\times \GL_2$ & $\GL_5\times \GL_{1}^{2n-4}\times \GL_2$ & $\GL_3\times \GL_2\times \GL_{1}^{2n-4}$  \\
        \hline
         {17} & $\Sp_{2p+2}\times \Sp_{2}$ & $\Sp_2\times \Sp_{2p}$ & $\Sp_4\times \Sp_2\times \GL_{1}^{p-1}$  & $\Sp_{2}^{2}\times \GL_{1}^{p-1}$ \\
        \hline
         {18} & $\Sp_{2p+2}\times \Sp_{2q+2}$ & $\Sp_2\times \Sp_{2p}\times \Sp_{2q}$ & $\Sp_4\times \Sp_4\times \GL_{1}^{p+q-2}$  & $\Sp_{2}^{3}\times \GL_{1}^{p+q-2}$ \\
        \hline
        19 & $\Sp_{2p+4}\times \Sp_4$ & $\Sp_{2p}\times \Sp_4$ & $\Sp_8\times \GL_{1}^{p-2}\times \Sp_4$ & $\Sp_{4}^{2}\times \GL_{1}^{p-2}$ \\
        \hline
         {20} & $\GL_{p+2}\times \Sp_{2q+2},\;p\leq 3$ & $\GL_p\times \SL_2\times \Sp_{2q}$ & $\GL_{p+2}\times \Sp_{4}\times \GL_{1}^{q-1}$ & $\GL_p\times \SL_2\times \SL_2\times \GL_{1}^{q-1}$ \\
        \hline
         {21} & $\GL_{2p+2}\times \Sp_{2q+2}$ & $\GL_{2p}\times \SL_2\times \Sp_{2q}$ & $\GL_{4}\times \Sp_{4}\times \GL_{1}^{2p+q-3}$ & $\GL_2\times \SL_2\times \SL_2\times \GL_{1}^{2p+q-3}$ \\
        \hline
         {22} & $\GL_{2p+3}\times \Sp_{2q+2}$ & $\GL_{2p+1}\times \SL_2\times \Sp_{2q}$ & $\GL_{5}\times \Sp_{4}\times \GL_{1}^{2p+q-3}$ & $\GL_3\times \SL_2\times \SL_2\times \GL_{1}^{2p+q-3}$ \\
        \hline
         {23} & $\Sp_4\times \Sp_{2p+2}\times \Sp_{2}$ & $\Sp_{2}^2\times \Sp_{2p}$  & $\Sp_{4}^{2}\times \Sp_{2}\times \GL_{1}^{p-1}$ & $ \Sp_{2}^{3}\times \GL_{1}^{p-1}$ \\
        \hline
         {24} & $\Sp_4\times \Sp_{2p+2}\times \Sp_{2q+2}$ & $\Sp_{2}^2\times \Sp_{2p}\times \Sp_{2q}$  & $\Sp_{4}^{3}\times \GL_{1}^{p+q-2}$ & $\Sp_{2}^{4}\times \GL_{1}^{p+q-2}$ \\
        \hline
         {25} & $\Sp_{2p+2}\times \Sp_{2}^{2}$ & $\Sp_{2p}\times \Sp_2$ & $\Sp_{4}\times \Sp_{2}^{2}\times \GL_{1}^{p-1}$ & $\Sp_{2}^{2}\times \GL_{1}^{p-1}$ \\
        \hline
         {26} & $\Sp_{2p+2}\times \Sp_{2q+2}\times \Sp_{2}$ & $\Sp_{2p}\times \Sp_{2q}\times \Sp_2$ & $\Sp_{4}^{2}\times \Sp_{2}\times \GL_{1}^{p+q-2}$ & $\Sp_{2}^{3}\times \GL_{1}^{p+q-2}$ \\
        \hline
         {27} & $\Sp_{2p+2}\times \Sp_{2q+2}\times \Sp_{2r+2}$ & $\Sp_{2p}\times \Sp_{2q}\times \Sp_{2r}\times \Sp_2$ & $\Sp_{4}^{3}\times \GL_{1}^{p+q+r-3}$ & $\Sp_{2}^{4}\times \GL_{1}^{p+q+r-3}$ \\
        \hline
	\end{tabular}
	\captionof{table}{Non-tempered spherical varieties for Type $A,B,C,D$ and $G$}
    \label{Table 1}
\end{figure}

\begin{figure}[h!]
	\begin{tabular}{| c | c | c |c| c| }
		\hline
		\textnumero & $G$ & $H$ & $L$ & $H_L$ \\
		\hline
		1 & $F_4$ & $\Spin_9$ & $\GL_2\times \GL_1\times \GL_1$ & $\GL_{1}^{4}$ \\
        \hline
        2 & $E_6$ & $F_4$ & $\GL_3\times \GL_3$ & $\GL_3\times \GL_1$ \\
        \hline
        3 & $E_6$ & $D_5$ & $\GL_4\times \GL_1\times \GL_1$ & $\GL_2\times \GL_2$ \\
        \hline
        4 & $E_7$ & $E_6$ & $\GL_3\times \GL_3\times \GL_1$ & $\GL_3$ \\
        \hline
        5 & $E_7$ & $A_1\times D_6$ & $E_6$ & $A_1\times A_5$ \\
        \hline
        6 & $E_8$ & $A_1\times E_7$ & $E_6$ & $A_1\times A_5$ \\
        \hline
	\end{tabular}
	\captionof{table}{Non-tempered spherical varieties for Type $E$ and $F$}
    \label{Table 2}
\end{figure}

Let $Q=LU$ be a parabolic subgroup of $G$ and $N_L$ be a maximal unipotent subgroup of $L$. Then $N=N_LU$ is a maximal unipotent subgroup of $G$. Let $\xi_L$ be a generic character of $[N_L]$ and we can extend it to a character of $[N]$ by making it trivial on $U$. Let $f$ (resp. $f'$) be a Schwartz function on $G(\BA)$ and $K_f$ (resp. $K_{f'}$) be the usual kernel function. In this paper, we want to compare the following two relative relative formulas.
$$I(f)=\int_{[N]}\int_{[H]}K_f(h,n)\xi_L(n)dhdn,\; J(f')=\int_{[N_L]}\int_{[H_L]}K_{f'}(h,n)\xi_L(n)dhdn.$$
If $(G,H)$ is tempered, $(L,H_L)=(G,H)$ and the above two relative trace formulas are identical. When $(G,H)$ is not tempered, such a comparison would relate the period integral of a non-tempered spherical variety to the period integral of a tempered spherical variety associated to a Levi subgroup. When $(G,H)=(\GL_{2n},\Sp_{2n}),\;(G_2,\SL_3),\;(\SO_7,G_2),\;(\SO_8,G_2)$ or $(\Sp_{4n},\Sp_{2n}\times \Sp_{2n})$, such a comparison has already been studied in some previous papers of Jacquet, Rallis, Jiang, Mao \cite{JR}, \cite{J}, \cite{JMR}, \cite{MR}. In this paper, we will generalized it to all the spherical varieties.

In general the comparison of two relative trace formulas are very difficult to prove, especially the fundamental lemma and smooth transfer. Also the comparison can not be given explicitly in the sense that for a Schwartz function $f$ on $G(\BA)$, we only know the existence of a function $f'\in J(f')$ such that $I(f)=J(f')$ instead of an explicit construction of $f'$. However, in the comparisons considered in this paper, there is an explicit formula for $f'$ in terms of $f$, which will be defined below.

\begin{defn}
Let $K_H\subset H(\BA)$ be a maximal compact group and $\chi$ be a character of $[L]$. For a Schwartz function $f$ of $G(\BA)$, we define a function $\CF_\chi(f)$ on $L(\BA)$ to be
$$\CF_\chi(f)(l)=\chi(l)\int_{U(\BA)}\int_{K_H}f(klu)dkdu.$$
\end{defn}

We also need a definition of local $L$-supercuspidal.

\begin{defn}
Let $F$ be a non-archimedean local field. We say a function $f\in C_{c}^{\infty}(G(F))$ is $L$-supercuspidal if it is a linear combination of the functions of the form 
$$\phi(g)= \begin{cases}
\phi_L(l) & \text{if } x = kl\in K L(F), \\
0 & \text{otherwise}.
\end{cases}$$
where $K$ is an open compact subgroup of $G(F)$ and $\phi_L$ is a matrix coefficient of a supercuspidal representation of $L(F)$. We say a Schwartz function $f$ on $G(\BA)$ is locally $L$-supercuspidal if it is of the form $f=\sum_{i=1}^{n}\phi_i$ where $\phi_i=\otimes_{v\in |k|}\phi_{i,v}\in \CS(G(\BA))$ such that for each $i$ the function $\phi_{i,v}$ is $L$-supercuspidal for some $v\in |k|$.
\end{defn}

Now we are ready to state our main result. Let $(G,H)$ and $(L,H_L)$ be as above. As $H_L$ is a Levi subgroup of $H$, we let $H_Q$ be a parabolic subgroup of $H$ with Levi factor $H_L$. It is clear from the table that the modular character $\delta_{H_Q}$ on $H_L$ can be extended to a character $\chi$ of $L$. Such an extension is not unique but we will just fix one. Then we can state our main theorem.

\begin{thm}\label{main theorem}
With the notation above, for all the models in Table \ref{Table 1}, we have
$$I(f)=J(\CF_\chi(f')).$$
for all $f\in \CS(G(\BA))$ that is locally $L$-supercuspidal.
\end{thm}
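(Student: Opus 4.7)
\textbf{Overall strategy.} The plan is to compare the two relative trace formulas on the geometric side. I would expand $I(f)$ and $J(\CF_\chi(f))$ into sums over $H(k)\times N(k)$- and $H_L(k)\times N_L(k)$-double cosets respectively, exploit the local $L$-supercuspidality of $f$ to kill every non-main orbit on the $G$-side, and then match the single surviving orbital integral with the main orbital integral of $J(\CF_\chi(f))$ through an explicit unfolding.

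\textbf{Steps in order.} First I would expand each kernel as
$$K_f(h,n)=\sum_{\gamma\in G(k)}f(h^{-1}\gamma n),\qquad K_{\CF_\chi(f)}(h,n)=\sum_{\delta\in L(k)}\CF_\chi(f)(h^{-1}\delta n),$$
and break the resulting double integrals into sums of orbital integrals indexed by $H(k)\backslash G(k)/N(k)$ and $H_L(k)\backslash L(k)/N_L(k)$. Second, I would identify the main double coset $\gamma_0$, matched under $Q\to L$ with the main $H_L\times N_L$-double coset on $L$. Using the parabolic $H_Q=H_L\cdot U_H$ of $H$ with $U_H=H\cap U$, the Iwasawa decomposition $H(\BA)=K_H\cdot H_L(\BA)\cdot U_H(\BA)$, and the factorization $N=N_LU$, I would unfold the main orbital integral as
$$I_{\gamma_0}(f)=\int_{[N_L]}\int_{[H_L]}\Bigl(\int_{K_H}\int_{U(\BA)}f(k\,h_L\,u\,n_L)\,dk\,du\Bigr)\delta_{H_Q}^{-1}(h_L)\,\xi_L(n_L)\,dh_L\,dn_L.$$
Since $\chi|_{H_L}=\delta_{H_Q}$ by construction and $\xi_L$ is trivial on $U$, the parenthetical integral equals $\chi(h_Ln_L)^{-1}\CF_\chi(f)(h_Ln_L)$, and the $\delta_{H_Q}^{-1}$ twist cancels against the restriction of $\chi^{-1}$ to $H_L$. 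The remainder is exactly the main orbital integral of $J(\CF_\chi(f))$, which I expect to equal $J(\CF_\chi(f))$ itself once the non-main orbits are killed on both sides.

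\textbf{Main obstacle.} The essential difficulty is the vanishing $I_\gamma(f)=0$ for $\gamma\neq\gamma_0$. For each such $\gamma$ I would show that the orbital integral factors through an inner integration $\int_{V(F_v)}\phi_v(g\cdot v)\,dv$ at a place $v$ where $f$ has an $L$-supercuspidal component $\phi_v$, with $V\subset G(F_v)$ a unipotent subgroup whose image under the Levi quotient $Q\to L$ is the unipotent radical of a \emph{proper} parabolic of $L$; the standard vanishing of Jacquet modules of supercuspidal matrix coefficients then forces the inner integral to be zero. Identifying $V$ for each non-main $\gamma$ requires explicit knowledge of the $H\times N$-double coset geometry in $G$, and must be verified case by case for the $27$ families in Table~\ref{Table 1}, with the exceptional entries of Table~\ref{Table 2} the most delicate. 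A uniform and conceptual treatment, leveraging the observation from \cite{BSV} that $(L,H_L)$ is the tempered ``Levi core'' of the non-tempered spherical $X=G/H$, would be more satisfying but seems to require further development of the relative-duality framework beyond the scope of the present identity.
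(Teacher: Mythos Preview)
Your unfolding of the main term is essentially correct and matches the paper's computation of $I_{i_0}(f)$ (up to the sign convention on $\delta_{H_Q}$). Note, however, that the paper does not decompose $J(\CF_\chi(f))$ into orbital integrals at all: the unfolding shows directly that the main term $I_{i_0}(f)$ equals the \emph{entire} distribution $J(\CF_\chi(f))$, so there is nothing to kill on the $J$-side.

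There is a genuine gap in your treatment of the non-main orbits. You assert that for every $\gamma\neq\gamma_0$ the orbital integral factors through an inner integral over a unipotent $V\subset G(F_v)$ whose image in $L$ is the unipotent radical of a proper parabolic, so that Jacquet-module vanishing for supercuspidals applies. This is false in general. The paper decomposes instead along $Q(k)\backslash G(k)/H(k)$ and records, in Proposition~\ref{main prop}(2), that for each non-main coset the projection $L_i$ of the stabilizer to $L$ is \emph{either} of parabolic-induced type \emph{or} a reductive, non-tempered spherical subgroup of $L$. The second alternative occurs abundantly: e.g.\ in Model~3 one finds $L_i\supset\Sp_{2i}\subset\GL_{2i}$, in Model~1 one finds $L_i=\GL_{a-i}\times\GL_{a+i}\subset\GL_{2a}$ with $i>0$, and these contain no unipotent radical of a proper parabolic of $L$. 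For such orbits the vanishing has nothing to do with supercuspidality; it comes from Sakellaridis's theorem (Theorem~3 of \cite{P}) that the character $\xi_L$ is nontrivial on $N_L\cap\delta^{-1}L_i\delta$ for every $\delta$ when $L/L_i$ is not tempered, so the inner $[N_L]$-integral already vanishes. Indeed for Models 3, 5--10, 17, 18, 20, 23--27 (and Model~4 with $k=0$) \emph{all} non-main orbits are of this non-tempered type, and the $L$-supercuspidal hypothesis can be dropped entirely. Your mechanism handles only the parabolic-induced orbits and would leave the non-tempered ones untreated.

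A secondary point: decomposing along $H\backslash G/N$ rather than $H\backslash G/Q$ produces vastly more orbits (infinitely many, Bruhat-style) and obscures the dichotomy above. The paper's two-step decomposition---first $H\backslash G/Q$, then the residual $L_i\backslash L/(N_L,\xi_L)$---is what makes the case analysis in Section~3 tractable and is what isolates Sakellaridis's theorem as the second vanishing mechanism.
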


For the rest of this subsection, we will explain the proof of the theorem as well as why we need the locally $L$-supercuspidal condition (and when we do not need it). Let $G(k)=\cup_i Q(k)\gamma_i H(k)$ be the double coset decomposition (there maybe infinitely many orbits). For each $i$, we let $Q_i=Q\cap \gamma_i H\gamma_{i}^{-1}$, $H_i=H\cap \gamma_{i}^{-1}Q\gamma_i$ and $L_i$ be the projection of $Q_i$ to $L$. In Section 2, by using some unfolding technique, we will relate the relative trace $I(f)$ to the summation of the relative trace formula of $L_i\back L/ (N_L,\xi_L)$. Then in Section 3, we will prove the following proposition for all the models in Table \ref{Table 1} by a case-by-case argument.

\begin{prop}\label{main prop}
With the notation above, for all the models in Table \ref{Table 1}, the following hold.
\begin{enumerate}
\item There exists $i=i_0$ such that $L_{i_0}=H_L$ and $H_i$ is a parabolic subgroup of $H$ with Levi factor $H\cap \gamma_{i}^{-1}L\gamma_i$ and unipotent radical $H\cap \gamma_{i}^{-1}U\gamma_i$ (without loss of generality we may assume $\gamma_{i_0}=1$). 
\item For all $i\neq i_0$, the spherical varieity $L/L_i$ is either of parabolic induced type, or not tempered. Here we say a spherical subgroup of $L$ is of parabolic induced type if it contains the unipotent radical of a proper parabolic subgroup of $L$.
\end{enumerate}
\end{prop}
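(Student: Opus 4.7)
The plan is a uniform case-by-case verification over the twenty-seven rows of Table \ref{Table 1}, following a common three-step template for each model $(G, H, L, H_L)$.

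First, I would parametrize the double-coset space $Q(k)\backslash G(k)/H(k)$. For the classical rows (1--9), this reduces to elementary linear algebra: for instance, row 3 with $(G,H)=(\GL_{2n},\Sp_{2n})$ and $Q$ having Levi $\GL_n\times\GL_n$ corresponds to recording the rank of the restriction of the symplectic form to the chosen $n$-dimensional subspace of a Levi factor, giving finitely many orbits. Rows 4--9 are handled analogously through isotropy invariants of symmetric or symplectic forms, and the product rows 15--27 factor through the single-factor cases already treated. The exceptional-type rows 10--14 require a more ad hoc Bruhat-style count but remain finite.

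Second, I take $\gamma_{i_0}=1$ and verify part (1) by inspection of the tables: the identification $L\cap H=H_L$ is explicit row by row, and the structure of $H_{i_0}=Q\cap H$ as a parabolic subgroup of $H$ with Levi $H_L$ and unipotent radical $U\cap H$ follows from observing that the embedding $H\hookrightarrow G$ restricts the standard partial flag defining $Q$ to a partial flag on $H$. Third, for each remaining orbit representative $\gamma_i$, I would compute $L_i$ as the image in $L$ of $Q\cap \gamma_i H\gamma_i^{-1}$ and check the dichotomy in (2). When $\gamma_i$ conjugates a positive-dimensional subgroup of $H$ into the unipotent radical $U$, the image $L_i$ typically contains the unipotent radical of a proper parabolic of $L$ and is thus of parabolic-induced type. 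Otherwise $L_i$ is reductive, and I would match $(L, L_i)$ against the Bravi--Pezzini classification recalled in the tables above and verify $P(L/L_i)\neq B_L$ by reading off the explicit parabolic $P(L/L_i)$ from that classification.

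The principal obstacle is the sheer volume and delicacy of the bookkeeping rather than any conceptual difficulty. The product cases 15--27 are the most tedious, because a nontrivial orbit typically mixes induced and non-tempered behavior across the different factors; one must show that at least one factor forces parabolic-induced type or non-temperedness on the whole. Rows 10--14 involving $G_2$, $\Spin_7$ and triality-related embeddings are also delicate, since they rely on specific algebraic identities in the relevant exceptional realizations rather than on generic linear algebra. Nevertheless, in every row the finiteness of orbits and the explicit embeddings reduce the verification to a finite mechanical check, and the pattern of which orbits fall on which side of the dichotomy in (2) is consistent enough across cases that an organized table can be compiled uniformly.
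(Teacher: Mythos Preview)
Your plan is essentially the paper's: a case-by-case parametrization of $Q\backslash G/H$ for each row, computation of the projection $L_i$ of each stabilizer, and sorting the non-main orbits into the parabolic-induced or non-tempered bin. Two caveats, though, where your sketch diverges from what actually happens.

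First, the double cosets are \emph{not} always finite. In Models 6 and 7 (the cases $\SO_n/\SO_a\times\SO_b$), the orbits are parametrized by integers $(k_1,k_2,k_3)$ together with an isometry class of a $k_3$-dimensional non-degenerate quadratic subspace, and over a number field there are infinitely many such classes when $0<k_3<m$. The verification still goes through because the shape of $L_i$ depends only on the numerical invariants $(k_1,k_2,k_3)$, so the infinitely many orbits are handled uniformly; but your phrase ``finite mechanical check'' is misleading and would send you looking for an orbit count that does not exist.

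Second, your heuristic for detecting parabolic-induced type is backwards. When $\gamma_i$ conjugates a piece of $H$ into $U$, that piece dies under projection to $L$ and contributes nothing to $L_i$. In the actual computations the unipotent radical inside $L_i$ arises from a different mechanism: one first computes a coarser decomposition $Q'\backslash G/H$ for some $Q'\supset Q$ with larger Levi, then refines by taking Borel orbits on an auxiliary factor of that Levi, and it is the Borel piece of this auxiliary factor that lands in $L_i$ as a unipotent radical. This two-step refinement is already visible in Model 1 and is the workhorse throughout. Finally, note that the paper does not compute Models 10--12 from scratch but quotes Jiang's $G_2$ paper, and Models 13--14 are lifted from the author's earlier joint work with Pollack and Zydor; attempting these ab initio via ``Bruhat-style counts'' would be considerably more involved than you suggest.
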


In the next subsection, we will give a conceptional explanation of the first part of the proposition from the point of view of the relative Langlands duality. The second part of the proposition remains mysterious as we do not have enough understanding of the non-open Borel/parabolic orbits of spherical varieties at this moment.

For an orbit $Q\gamma_i H$, if $i=i_0$, by the first part of the proposition, we know that the relative trace formula of $L_i\back L/ (N_L,\xi_L)$ just gives us $J(\CF_\chi(f))$. If $i\neq i_0$, there are two cases. If $L_i$ is not tempered, then by a result of Sakellaridis in Theorem 3 of \cite{P}, we know that the relative trace formula of $L_i\back L/ (N_L,\xi_L)$ vanishes for all Schwartz functions. If $L_i$ is of parabolic induced type, by the locally $L$-supercuspidal assumption we can show that the relative trace formula of $L_i\back L/ (N_L,\xi_L)$ vanishes. This completes the proof of the theorem.

\begin{rmk}
As we explained above, the only place we use the locally $L$-supercuspidal condition is when $L_i$ is of parabolic induced type. In other words, if $L_i$ is not tempered for all $i\neq i_0$, we can remove the locally $L$-supercuspidal condition in Theorem \ref{main theorem}. By our computations in Section 3, this applies to Model 3, Model 4 when $k=0$, Model 5-10, 17, 18, 20, 23-27 of Table \ref{Table 1}.  For those model, we actually proved that 
$I(f)=J(\CF_\chi(f'))$
for all $f\in \CS(G(\BA))$.
\end{rmk}

\begin{rmk}
The reason we only consider models in Table \ref{Table 1} in this paper is because the computation (i.e. the proof of Proposition \ref{main prop}) would be more complicated in Type $E$ and $F$. We will consider those models in a future paper.
\end{rmk}

\begin{rmk}
It is also possible to consider the comparison in the non-split cases, as long as the Levi subgroup $L$ of $G$ is defined over $k$. The computation will be similar to the split case, one only need to prove an analogue of Proposition \ref{main prop}. For example, one can study the comparison between the pair $U_{2n}/\Sp_{2n}$ and the pair $Res_{E/F}\GL_n/\GL_n$ where $E/F$ is the quadratic extension defining the quasi-split unitary group $U_{2n}$.
\end{rmk}

\begin{rmk}
Locally over a p-adic field where the character $\chi$ is unramified, the same argument as in Section 2.3 of \cite{JMR} shows that the map $\mapsto F_{\chi}(f)$ is a homomorphism from the Hecke algebra of $G$ to the Hecke algebra of $L$. Moreover, one can obtain an explicit description of this homomorphism similar to Proposition 2.4 of \cite{JMR}. In many cases, one can also study the spectral side of this relative trace formula comparison using a similar argument to that in Section 3 of \cite{JMR}. Finally, for many models in Tables \ref{Table 1} and \ref{Table 2}, one can apply the residue method to establish a relation between period integrals of residue representations for $G/H$ and period integrals of cuspidal automorphic representations for $L/H_L$. Combined with the Langlands-Shahidi method for residue representations, this yields relations between the period integrals of $L/H_L$ and certain automorphic L-functions. We refer the reader to \cite{PWZ2} for a discussion of the residue method and examples (which include several models from Tables \ref{Table 1} and \ref{Table 2}). In this paper, we do not pursue these directions further and instead focus on the relative trace formula identity in Theorem \ref{main theorem}.
\end{rmk}

\subsection{The relation with the relative Langlands duality and a general conjecture for Hamiltonian spaces}
In this subsection, we will give a conceptual explanation of our main theorem (Theorem \ref{main theorem}) from the point of view of the relative Langlands duality. We first recall the period integral conjecture in the relative Langlands duality. 

Let $G$ be a split connected reductive group and $\hat{G}$ be its dual group. Following Section 3.5 of \cite{BSV}, we say a smooth affine $G$-Hamiltonian space $\CM$ is hyperspherical if it satisfies the following three conditions:
\begin{itemize}
\item (coisotropic condition) The field of $G$-invariant rational functions on $\CM$ is commutative with respect to the Poisson bracket.
\item The image of the moment map $\CM\rightarrow \Fg^\ast$ has nonempty intersection with the nilcone of $\Fg^\ast$.
\item The stabilizer (in $G$) of a generic point of $\CM$ is connected.
\end{itemize}

In Section 3.6 of \cite{BSV}, Ben-Zvi, Sakellaridis, and Venkatesh proved a structure theorem for those Hamiltonian space which we will recall here. We define a BZSV quadruple for $G$ to be $\Delta=(G,H,\iota,\rho_H)$ where $H$ is a split reductive subgroup of $G$;  $\rho_H$  is a symplectic representation of $H$; and  $\iota$ is a homomorphism from $\SL_2$ into $G$ whose image commutes with $H$. For a BZSV quadruple $\Delta=(G,H,\iota,\rho_H)$ of $G$, by \cite[Section 3]{BSV}, one can associate a $G$-Hamiltonian variety $\CM_\Delta$. Theorem 3.6.1 of \cite{BSV} states that over an algebraic closed field any smooth affine hyperspherical $G$-Hamiltonian space is of the form $\CM_\Delta$ associated with a unique BZSV quadruple $\Delta=(G,H,\iota,\rho_H)$ of $G$. We say the quadruple $\Delta$ is hyperspherical if the associated Hamiltonian space $\CM_\Delta$ is hyperspherical.

\begin{rmk}
In the special case when $\iota$ is trivial and $\rho_H=0$ (i.e. $\Delta=(G,H,1,0)$), the associated Hamiltonian space $\CM_\Delta$ is just the cotangent bundle of the variety $G/H$. In this case, $\Delta$ is hyperspherical if and only if the variety $X=G/H$ is spherical and does not have Type N root (the coisotropic condition is equivalent to the spherical variety case in this case). 
\end{rmk}

For a BZSV quadruple $\Delta=(G,H,\iota,\rho_H)$, let $L$ the centralizer of $h(t):=\iota(\begin{pmatrix}t&0\\ 0&t^{-1}\end{pmatrix})$ in $G$  and by $U=\exp(\mathfrak u)$ (resp. $\bar{U}=\exp(\bar{\mathfrak u})$) the corresponding unipotent subgroups of $G$ associated with $\iota$, where $\mathfrak u\subset \Fg$ (resp. $\bar{\mathfrak u}\subset \Fg$) is the positive weight space (resp. negative weight space) of the Lie algebra $\Fg$ of $G$ under the adjoint action of $h(t)$. Then $P=LU$ and $\bar{P}=L\bar{U}$ are parabolic subgroups of $G$ that are opposite to each other. Since $H$ commutes with the image of $\iota$, we have $H\subset L$. 

Let $\mathfrak u^+$ be the $\geq 2$ weight space under the adjoint action of $h(t)$. It is well known that the vector space $\mathfrak u/\mathfrak u^+$ has a symplectic structure and realizes a symplectic representation of $H$ (and of $L$) under the adjoint action. We use $\rho_{\iota}$ to denote the symplectic representation $\Fu/\Fu^+$ of $H$ and let $\rho_{H,\iota}=\rho_H\oplus \rho_{\iota}$. 

\begin{defn}\label{defn anomaly-free}
We say a BZSV quadruple $\Delta=(G,H,\iota,\rho_H)$ is anomaly-free if the symplectic representation $\rho_{H,\iota}$ is an anomaly-free symplectic representation of $H$ (defined in Definition 5.1.2 of \cite{BSV}). We say a smooth affine hyperspherical Hamiltonian space $\CM=\CM_{\Delta}$ is anomaly-free if $\Delta$ is anomaly-free.    
\end{defn}

\begin{defn}\label{defn Whittaker induction}
For a BZSV quadruple $\Delta=(G,H,\iota,\rho_H)$, we define $\Delta_{red}=(L,H,1,\rho_{H,\iota})$ where $1$ stands for the trivial homomorphism from $\SL_2$ into $L$ (i.e. it maps every element to the identity). We say $\Delta$ is reductive if $\iota=1$ ($\iff \Delta=\Delta_{red}$).
\end{defn}

In \cite{BSV}, Ben-Zvi--Sakellaridis--Venkatehs proposed a conjectural duality between the set of smooth affine anomaly-free hyperspherical $G$-Hamiltonian spaces and the set of  smooth affine anomaly-free hyperspherical $\hat{G}$-Hamiltonian spaces, or equivalently, a conjectural duality between the set of anomaly-free hyperspherical BZSV quadruples of $G$ and the set of anomaly-free hyperspherical BZSV quadruples of $\hat{G}$. This proposed duality not only extends the classical Langlands program to a broader geometric setting but also provides a new perspective on the interaction between Hamiltonian symmetries and representation theory. They also formulated a series of elegant and far-reaching conjectures that should hold within this framework. 

\begin{rmk}
Despite its conceptual beauty, a major challenge in BZSV duality is the lack of a general algorithm to explicitly compute the dual of a given anomaly-free hyperspherical BZSV quadruples.  In other words, for a given anomaly-free hyperspherical BZSV quadruple $\Delta=(G,H,\iota,\rho_H)$, there is currently no known systematic procedure to determine its dual $\hat{\Delta}$. In Section 4 of \cite{BSV}, Ben-Zvi--Sakellaridis--Venkatehs devised an algorithm to compute the dual in a special case known as the polarized case, which is when the symplectic representation $\rho_{H,\iota}$ of $H$ is of the form $\rho_{H,\iota}=\tau\oplus \tau^\vee$ for some representation $\tau$ of $H$. In particular this include the cases when $\Delta=(G,H,1,0)$ (i.e. the spherical variety case). In a paper of Mao, Zhang and the author \cite{MWZ2}, they give an algorithm to compute the dual in the vector space case, i.e. the case when $\Delta=(G,G,1,\rho)$. In another ongoing paper of Tang, Zhang and the author, they give an algorithm to compute the dual when $G$ is a simple reductive group.
\end{rmk}

An important aspect of their conjecture concerns period integrals, which we will briefly recall here. Let $\Delta=(G,H,\iota,\rho_H)$ and $\hat{\Delta}=(\hat{G},\hat{H}',\hat{\iota}',\rho_{\hat{H}'})$ be two anomaly-free hyperspherical BZSV quadruples that are dual to each other. As we explained above, the map $\hat{\iota}'$ induce adjoint actions of \ $\hat{H}'\times \SL_2$ on $\hat{\Fg}$ and it can be decomposed as 
$$\hat{\Fg}=\oplus_{k\in \hat{I}} \hat{\rho}_k\otimes Sym^k$$
where $\hat{\rho}_k$ are representations of  $\hat{H}'$. It is clear that the adjoint representation of $\hat{H}'$ is a subrepresentation of $\hat{\rho}_0$. For an automorphic forms $\phi$ on $G(\BA)$, we can define the period integral $\CP_\Delta(\phi)$ associated to the quadruple $\Delta$ as in Chapter 1 of \cite{MWZ1} (when $\Delta=(G,H,1,0)$, $\CP_\Delta$ is just the one defined in \eqref{defn period integral}). The following conjecture is the main conjecture regarding period integrals in BZSV duality.

\begin{conj}\label{BSV conj} (Ben-Zvi--Sakellaridis--Venkatesh, Conjecture 14.3.5 and Equation (14.26) of \cite{BSV})
Let $\pi$ be an irreducible discrete automorphic representation of $G(\BA)$. For any embedding $\nu:\pi\rightarrow L^2(G(k)\back G(\BA))$, the period integral 
		$$\CP_{\Delta}(\phi),\;\phi\in Im(\nu)$$
		is nonzero only if the Arthur parameter of $\pi$ factors through $\hat{\iota}':\hat{H}'(\BC)\times \SL_2(\BC)\rightarrow \hat{G}(\BC)$. If this is the case, $\pi$ is a lifting of an Arthur packet $\Pi$ of $H'(\BA)$ (the Langlands dual group of $\hat{H}'$). Assume that $\Pi$ is tempered. Then we can choose the embedding $\nu$ so that 
		$$\frac{|\CP_{\Delta}(\phi)|^2}{\pair{\phi,\phi}}``=" \frac{L(1/2,\Pi,\rho_{\hat{H}'})\cdot\prod_{k\in \hat{I}}L(k/2+1,\Pi,\hat{\rho}_k)}{L(1,\Pi,Ad)^2}.$$
		Here $\pair{,}$ is the $L^2$-norm.
\end{conj}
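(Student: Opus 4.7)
The plan is to attack Conjecture \ref{BSV conj} for the non-tempered split reductive quadruples $\Delta=(G,H,1,0)$ in Table \ref{Table 1} by bootstrapping Theorem \ref{main theorem} and the residue method to reduce it to the (tempered) Sakellaridis--Venkatesh period conjecture for the Levi quadruple $\Delta_L=(L,H_L,1,0)$. The conceptual justification is that in every model of Table \ref{Table 1} the Arthur $\SL_2$ appearing in $\hat\iota'$ should arise from the parabolic $Q=LU$, so the factorization of Arthur parameters through $\hat\iota':\hat{H}'(\BC)\times \SL_2(\BC)\to \hat G(\BC)$ ought to correspond exactly to $\pi$ being the residue of an Eisenstein series induced from an automorphic representation on $L$.

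The execution proceeds in two stages. First, I would spectrally decompose both sides of the identity $I(f)=J(\CF_\chi(f))$ from Theorem \ref{main theorem}: on the $G$-side $I(f)$ distributes over the automorphic spectrum of $G(\BA)$ with coefficients involving $\CP_H(\phi)$ against a Whittaker functional, and by Sakellaridis' vanishing theorem only non-cuspidal $\pi$ can contribute; on the $L$-side $J(\CF_\chi(f))$ distributes over the spectrum of $L(\BA)$ with coefficients involving $\CP_{H_L}(\phi_L)$. Matching the two distributions forces the residual $\pi$ supporting $\CP_\Delta$ to be images under a residue-of-Eisenstein-series construction from tempered generic cuspidal $\sigma$ on $L$ with nonvanishing $\CP_{H_L}$, and Langlands--Shahidi theory identifies the Arthur parameter of such $\pi$ as $\phi_\sigma\boxtimes \nu$, where $\nu$ is the $\SL_2$-type dictated by $Q$; this is the asserted factorization through $\hat\iota'$. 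Second, to extract the $L$-value identity, one takes the tempered Sakellaridis--Venkatesh formula for $|\CP_{H_L}(\phi_L)|^2/\langle \phi_L,\phi_L\rangle$ on $L$ and computes its residue along the pole of the Eisenstein series. The normalizing character $\chi$ extending $\delta_{H_Q}$ is precisely what shifts the tempered $L$-factors into the $L(k/2+1,\Pi,\hat\rho_k)$ appearing in the conjecture, while the quotient by $L(1,\Pi,\Ad)^2$ accounts for the Petersson-norm change between $L$ and $G$ (one factor from each side of the Ichino--Ikeda-type normalization).

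The hard parts are severe enough that the proposal is genuinely conditional. First, the tempered Sakellaridis--Venkatesh conjecture for $L/H_L$ is itself open in general and must be assumed as input. Second, Theorem \ref{main theorem} carries the locally $L$-supercuspidal assumption, so only the cases listed in the remark (Models 3, 5--10, 17, 18, 20, 23--27) permit a clean spectral argument without further work; extending the identity to arbitrary $f\in\CS(G(\BA))$ is a prerequisite for the rest of the spectrum. Third, as noted in the excerpt, no general algorithm is known for computing the dual $\hat\Delta$ outside the polarized or vector-space cases, so the representations $\hat\rho_k$ and their $L$-factors must be identified model-by-model from the tables and matched against the Langlands--Shahidi residues. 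Finally, upgrading an identity of spectral distributions to an identity of individual period integrals requires local multiplicity-one statements for the relevant Arthur packets and refined local unfolding in the non-tempered regime, which is not yet available in full generality. The realistic deliverable of this strategy is therefore a uniform \emph{reduction} of Conjecture \ref{BSV conj} to the tempered Sakellaridis--Venkatesh conjecture plus the BZSV duality algorithm, together with unconditional verifications for specific models where both inputs are in place (notably Model 3 via \cite{JR}).
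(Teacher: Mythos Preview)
The statement you are addressing is a \emph{conjecture} that the paper does not prove and does not claim to prove. Conjecture \ref{BSV conj} is quoted from \cite{BSV} purely as motivation: it explains why one should expect the period $\CP_\Delta$ for a non-tempered $\Delta$ to be related to the period $\CP_{\widehat{(\hat\Delta_{red})}}$ for the tempered Levi quadruple, and this heuristic is what leads the author to the relative trace formula identity of Theorem \ref{main theorem}. There is therefore no ``paper's own proof'' to compare your proposal against.

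Your proposal is not a proof but a conditional research program, and you acknowledge this. As such it is reasonable and indeed broadly consonant with remarks the paper itself makes: the author explicitly notes (in the remark following Theorem \ref{main theorem}) that one can study the spectral side of the comparison $I(f)=J(\CF_\chi(f))$ as in \cite{JMR}, and that the residue method relates $\CP_H$ on residual representations to $\CP_{H_L}$ on cuspidal ones, but then states ``In this paper, we do not pursue these directions further and instead focus on the relative trace formula identity in Theorem \ref{main theorem}.'' So the spectral decomposition and residue computation you outline are precisely the directions the paper chooses \emph{not} to develop. Your list of obstructions (the open status of the tempered conjecture for $L/H_L$, the $L$-supercuspidal restriction, the lack of a general duality algorithm, the missing local multiplicity-one input) is accurate, and any one of them is enough to prevent your outline from being a proof. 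In short: there is no gap to diagnose in a nonexistent proof; your proposal is a plausible continuation of the paper's program, not a comparison point for anything the paper actually establishes.
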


\begin{rmk}\label{rmk BSV conj}
Roughly speaking, the conjecture asserts that the period integral associated to $\Delta$ equals the $L$-function associated to $\hat{\Delta}$. Conversely, if we interchange $\Delta$ and $\hat{\Delta}$, we similarly expect the period integral of $\hat{\Delta}$ to match the $L$-function associated to $\Delta$.

This conjecture is commonly known as an Ichino--Ikeda type conjecture. To state an explicit identity instead of using the notation ``$=$'', one must choose suitable Haar measures on $G$ and $H$, and make two adjustments to the right-hand side. We refer the reader to Remark 1.3 of \cite{MWZ1} for details.
\end{rmk}

\begin{rmk}\label{rmk Arthur}
Here we say an Arthur parameter $\phi:L_k\times \SL_2(\BC)\rightarrow \hat{G}(\BC)$ for a split reductive group $G$ is tempered if $\phi|_{\SL_2}=1$, where $L_k$ is the hypothetical Langlands group of $k$. 
If this is the case, we say the associated global Arthur packet is tempered. 

For a homomorphism $\hat{\iota}':\hat{H}'(\BC)\times \SL_2(\BC)\rightarrow \hat{G}(\BC)$ as in the above conjecture, we say an Arthur parameter $\phi:L_k\times \SL_2(\BC)\rightarrow \hat{G}(\BC)$ factors through $\hat{\iota}'$ if (up to conjugating $\phi$ by an element of $\hat{G}(\BC)$) there exists an Arthur parameter $\phi_{H'}:L_k\times \SL_2(\BC)\rightarrow \hat{H}'(\BC)$ of $H'$ such that $\phi_{H'}|_{L_k}=\phi|_{L_k}$ and $\phi(x)=\phi_{H'}(x)\hat{\iota}'(x)$ for $x\in \SL_2(\BC)$. In this case, we will say that the Arthur packet of $G(\BA)$ associated to $\phi$ is a lifting of the Arthur packet of $H'(\BA)$ associated to $\phi_{H'}$.
\end{rmk}

We say the quadruple $\Delta$ is tempered (resp. strongly tempered) if the $\SL_2$-homomorphism $\hat{\iota}'$ in its dual quadruple is trivial (resp. if $\hat{\iota}'$ is trivial and $\hat{H}'Z_{\hat{G}}=\hat{G}$). When $\Delta=(G,H,1,0)$ (i.e. the spherical variety case), $\Delta$ is tempered if and only if the spherical variety $X=G/H$ is tempered. By Conjecture \ref{BSV conj} above, it is expected that when the quadruple $\Delta$ is not tempered, the period integral $\CP_\Delta(\phi)$ should be equal to zero for all generic automorphic forms. Hence one needs to study the period for residue representations. 

Following Definition \ref{defn Whittaker induction}, we let $\hat{\Delta}_{red}=(\hat{L},\hat{H}',1,\hat{\rho}_{\hat{H}',\hat{\iota}'})$ with
$$\hat{\rho}_{\hat{H}',\hat{\iota}'}=\hat{\rho}_{\hat{H}'}\oplus \oplus_{k\in \hat{I},\; k\; \text{odd}}\hat{\rho}_k.$$
As $\hat{H}'$ is contained in the Levi subgroup $\hat{L}$, by Conjecture \ref{BSV conj}, if $\CP_{\Delta}(\phi)\neq 0$ for some automorphic forms $\phi$ of $G(\BA)$, the cuspidal support of $\phi$ must be contained in the Levi subgroup $L$ (here $L$ is the dual group of $\hat{L}$). Let $\pi_L$ be a tempered automorphic representation of $L(\BA)$ (i.e. its Arthur parameter is trivial on the $\SL_2$-component). We can apply Conjecture \ref{BSV conj} to the quadruple $\widehat{(\hat{\Delta}_{red})}$ and $\pi_L$. We get the following conjecture.

\begin{conj}\label{BSV conj for L}
For any embedding $\nu:\pi_L\rightarrow L^2(L(k)\back L(\BA))$, the period integral 
$$\CP_{\widehat{(\hat{\Delta}_{red})}}(\phi),\;\phi\in Im(\nu)$$
is nonzero only if the Arthur parameter of $\pi_L$ factors through $\hat{H}'(\BC)\subset \hat{L}(\BC)$. If this is the case, $\pi_L$ is a lifting of a tempered Arthur packet $\Pi$ of $H'(\BA)$ and we can choose the embedding $\nu$ so that 
$$\frac{|\CP_{\widehat{(\hat{\Delta}_{red})}}(\phi)|^2}{\pair{\phi,\phi}}``=" \frac{L(1/2,\Pi,\rho_{\hat{H}'})\cdot\prod_{k\in \hat{I},\;k\;\text{odd}}L(1/2,\Pi,\hat{\rho}_k)\cdot L(1,\Pi,\hat{\rho}_0)}{L(1,\Pi,Ad)^2}.$$
\end{conj}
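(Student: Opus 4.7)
The plan is to derive Conjecture \ref{BSV conj for L} as a direct consequence of Conjecture \ref{BSV conj}, by applying the latter to the reductive BZSV quadruple $\hat{\Delta}_{red}$ and its BZSV dual on the Levi side.

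First I would verify that $\hat{\Delta}_{red}=(\hat{L},\hat{H}',1,\hat{\rho}_{\hat{H}',\hat{\iota}'})$ is an anomaly-free hyperspherical BZSV quadruple for $\hat{L}$. The anomaly-free condition is immediate from Definition \ref{defn anomaly-free}, since the symplectic representation of $\hat{H}'$ attached to $\hat{\Delta}_{red}$ coincides, by construction, with $\hat{\rho}_{\hat{H}',\hat{\iota}'}$, the same one attached to $\hat{\Delta}$. Hypersphericality of $\hat{\Delta}_{red}$ should follow from that of $\hat{\Delta}$ via a direct inspection at the Lie algebra level, using that $\hat{\Fl}$ is the zero weight space of the adjoint action of $h(t)=\hat{\iota}'(\diag(t,t^{-1}))$ on $\hat{\Fg}$, and that the coisotropic and connected-stabilizer conditions descend from $\hat{\Delta}$ to its Levi--Whittaker reduction.

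Once $\hat{\Delta}_{red}$ is recognized as anomaly-free hyperspherical, BZSV duality provides a dual quadruple $\widehat{(\hat{\Delta}_{red})}$ which is an anomaly-free hyperspherical BZSV quadruple for $L$. I would then apply Conjecture \ref{BSV conj} with $\Delta\leftrightarrow \widehat{(\hat{\Delta}_{red})}$, $\hat{\Delta}\leftrightarrow \hat{\Delta}_{red}$, and the tempered automorphic representation $\pi_L$ of $L(\BA)$. Since the $\SL_2$-homomorphism in $\hat{\Delta}_{red}$ is trivial, the factorization condition simplifies to ``the Arthur parameter of $\pi_L$ factors through $\hat{H}'(\BC)\hookrightarrow \hat{L}(\BC)$'', matching exactly the statement of Conjecture \ref{BSV conj for L}; moreover, the resulting lifted packet $\Pi$ of $H'(\BA)$ is automatically tempered because $\pi_L$ is. The symplectic representation $\hat{\rho}_{\hat{H}',\hat{\iota}'}=\hat{\rho}_{\hat{H}'}\oplus \bigoplus_{k\in \hat{I},\, k\text{ odd}}\hat{\rho}_k$ then produces the half-integral factors $L(1/2,\Pi,\hat{\rho}_{\hat{H}'})\cdot \prod_{k\text{ odd}}L(1/2,\Pi,\hat{\rho}_k)$, while the remaining ``adjoint-type'' factor $L(1,\Pi,\hat{\rho}_0)$ should arise from the decomposition of $\hat{\Fl}$ under $\hat{H}'$ with trivial $\SL_2$ action, since $\hat{\Fl}$ is the $h(t)$-fixed subspace of $\hat{\Fg}=\oplus_k \hat{\rho}_k\otimes \mathrm{Sym}^k$ and its zero-weight contribution under $\hat{H}'$ is controlled by $\hat{\rho}_0$.

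The hard part I expect is precisely this last $L$-function bookkeeping step. A priori the zero-weight space of $\oplus_k \hat{\rho}_k\otimes \mathrm{Sym}^k$ also receives one-dimensional contributions from even $k>0$, and one must argue that in the anomaly-free hyperspherical setting these extra pieces either are absent or get absorbed into the normalizing adjustments alluded to in Remark \ref{rmk BSV conj} (Haar measure choices on $L(\BA)$ and $H'(\BA)$, and the explicit form of the denominator $L(1,\Pi,\mathrm{Ad})^2$). Matching these terms on the nose, and in particular reconciling the formula I obtain from applying Conjecture \ref{BSV conj} to $\hat{\Delta}_{red}$ with the single factor $L(1,\Pi,\hat{\rho}_0)$ appearing in the statement, is where I would concentrate the technical effort.
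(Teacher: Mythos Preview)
Your proposal is correct and follows exactly the paper's approach: the paper's entire derivation of Conjecture~\ref{BSV conj for L} is the single sentence ``We can apply Conjecture~\ref{BSV conj} to the quadruple $\widehat{(\hat{\Delta}_{red})}$ and $\pi_L$,'' and you have simply unpacked this in more detail. Note in particular that this statement is a \emph{conjecture}, not a theorem, so no proof is given or expected; your observation about the even-$k$ contributions to $\hat{\Fl}$ is accurate and is precisely the sort of normalization issue the paper absorbs into the $``="$ notation and Remark~\ref{rmk BSV conj}.
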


Combining Conjecture \ref{BSV conj} and \ref{BSV conj for L}, it is clear that the period integrals of $\CP_{\Delta}$ and $\CP_{\widehat{(\hat{\Delta}_{red})}}$ are closely related to each other. First, they are not equal to zero only if the automorphic representation is a lifting of an automorphic representation $\Pi$ on $H'(\BA)$. And if this is the case, the associated L-values are also closely related to each other (we expect the difference between the L-values in Conjecture \ref{BSV conj} and \ref{BSV conj for L} to be related to the difference of the $L^2$-norms on $G(\BA)$ and $L(\BA)$). Meanwhile, the lifting from $H'(\BA)$ to $G(\BA)$ can be decomposed into two steps: one can first lift $\Pi$ from $H'(\BA)$ to $L(\BA)$, then build the Eisenstein series to $G(\BA)$ and take the residue. Moreover, if we assume that the lifting of $\Pi$ to $L(\BA)$ is tempered and cuspidal, then the constant term of the lifting of $\Pi$ to $G(\BA)$ along $U(\BA)$ (here $Q=LU$ is a parabolic subgroup of $G$) is essentially equal to its lifting to $L(\BA)$ (together with certain intertwining operators).\footnote{if we do not assume the lifting to $L(\BA)$ is cuspidal then the constant term is more complicated and this is one of the reasons why we make the $L$-supercuspidal assumption in the relative trace formula comparison.} In a way we can view $\widehat{(\hat{\Delta}_{red})}$ as the ``cuspidal support" of the quadruple $\Delta$.



Motivated by the above discussion, we will make a general relative trace formula comparison regarding the period integrals $\CP_{\Delta}$ and $\CP_{\widehat{(\hat{\Delta}_{red})}}$. Let $Q=LU$ be a parabolic subgroup, $N_L$ be a maximal unipotent subgroup of $L$, $N=N_LU$ and $\xi_L$ be a generic character of $[N_L]$ (which will be extended to a character of $[N]$ by making it trivial on $U$) as in the previous subsection. Let $f$ (resp. $f'$) be a Schwartz function on $G(\BA)$ (resp. $L(\BA)$) and $K_f$ (resp. $K_{f'}$) be the kernel function as before. The relative trace formula $I(f)$ is given by taking the $\Delta$-period on the first variable of $K_f$ and the $(N,\xi_L)$-period on the second variable:
$$I(f)=\int_{[N]}\CP_{\Delta}(K_f(\cdot,n))\xi_L(n)dn. $$
On the other hand, the relative trace formula $J(f')$ is given by taking the $\widehat{(\hat{\Delta}_{red})}$-period on the first variable of $K_{f'}$ and the $(N_L,\xi_L)$-period on the second variable:
$$J(f')=\int_{[N_L]}\CP_{\widehat{(\hat{\Delta}_{red})}}(K_{f'}(\cdot,n))\xi_L(n)dn. $$

\begin{conj}\label{main conj}
With the notation above, there should be a character $\chi$ of $L$ (depends on $\Delta$) such that
$$I(f)=J(\CF_\chi(f'))$$
for all $f\in \CS(G(\BA))$ that is locally $L$-supercuspidal.
\end{conj}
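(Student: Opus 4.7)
The plan is to adapt the unfolding strategy that proved Theorem \ref{main theorem} to the Hamiltonian setting. Since $\CP_\Delta$ is built from three pieces of data — the subgroup $H$, the Whittaker induction $\iota$, and the symplectic representation $\rho_H$ — the first step is to integrate the kernel $K_f(\cdot,n)$ successively against these three ingredients. Concretely, after forming a theta kernel for $\rho_H$ and applying the $\iota$-twisted Whittaker period for the $\SL_2$-part, one expands the remaining $[H]$-integral using the double coset decomposition $G(k)=\bigsqcup_i Q(k)\gamma_i H(k)$. This should express $I(f)$ as a sum, indexed by $i$, of relative trace formulae on $L$ attached to the ``restricted'' BZSV quadruple $\Delta_i=(L_i,H_i,\iota_i,\rho_{H,i})$, where $L_i$ is the image in $L$ of $Q\cap \gamma_i H\gamma_i^{-1}$ and the other data are obtained by transporting the ambient data along $\gamma_i$ and restricting.

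The crux of the argument is an analog of Proposition \ref{main prop}: there should exist a distinguished orbit $i=i_0$ for which $\Delta_{i_0}$ agrees with $\widehat{(\hat\Delta_{red})}$, so that its contribution is precisely $J(\CF_\chi(f))$ for the appropriate character $\chi$; and for every other $i$, the restricted quadruple $\Delta_i$ should be either (a) of parabolic-induced type in the sense that $H_i$ contains the unipotent radical of a proper parabolic of $L$, or (b) non-tempered as a BZSV quadruple for $L$. In case (a), the local $L$-supercuspidality of $f$ forces the contribution to vanish by the standard argument that a supercuspidal matrix coefficient integrates to zero against any unipotent subgroup it contains. In case (b), one needs a generalization of Sakellaridis's vanishing theorem (Theorem 3 of \cite{P}) from the spherical setting to arbitrary hyperspherical Hamiltonian spaces: the period $\CP_{\Delta_i}$ should vanish on the generic spectrum, and after pairing with the generic Whittaker functional $\xi_L$ only the generic spectrum of $L$ survives. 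The conceptual justification for the existence and uniqueness of $i_0$ should come from the relative Langlands duality picture: $\widehat{(\hat\Delta_{red})}$ is the ``cuspidal support'' of $\Delta$, so the open orbit relative to the parabolic $Q$ must produce exactly this reduction. The character $\chi$ itself is pinned down by requiring that the product of the modular character of $H_Q=H_L U_{i_0}\cap H$ with the twisting factors arising from the adjoint action of $h(t)$ on $\Fu/\Fu^+$ and from the polarization of $\rho_H$ extend to a genuine character of $L$; this is where the anomaly-free hypothesis of Definition \ref{defn anomaly-free} should play its natural role.

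The principal obstacle is two-fold. First, there is currently no uniform structure theory for the $Q$-orbits on $G/H$ (let alone for the enriched orbit data involving $\iota$ and $\rho_H$) in the general hyperspherical setting — the classification of spherical varieties in \cite{BP} that underpins the proof of Proposition \ref{main prop} has no known analog at the Hamiltonian level, and in the absence of a general algorithm for BZSV duality even identifying $\widehat{(\hat\Delta_{red})}$ case-by-case is nontrivial. Second, the required vanishing statement for non-tempered hyperspherical periods is a genuine extension of Sakellaridis's theorem and must presumably be proved by combining his unfolding methods with the Whittaker and theta-series ingredients. A realistic intermediate goal would therefore be to verify Conjecture \ref{main conj} in the polarized case (where the algorithm of \cite{BSV} computes duals), in the vector-space case $\Delta=(G,G,1,\rho)$ (covered by \cite{MWZ2}), and in the simple-group case, before attempting the full conjecture.
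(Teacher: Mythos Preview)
The statement is Conjecture \ref{main conj}, which the paper does not prove and explicitly presents as open. The only case established is the spherical-variety case $\Delta=(G,H,1,0)$, which is exactly Theorem \ref{main theorem}; beyond that the paper merely remarks that in the polarized case a similar unfolding argument ``should'' work, pointing to \cite{JQ} and \cite{PWZ} for two individual quadruples. There is therefore no proof in the paper to compare your proposal against.

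That said, your outline matches the heuristic the paper itself offers: unfold along $Q(k)\backslash G(k)/H(k)$, isolate a distinguished orbit contributing $J(\CF_\chi(f))$, and kill the remaining orbits via $L$-supercuspidality or a non-temperedness vanishing. You are also candid about the two genuine obstructions, and these are precisely what the paper leaves unresolved: (i) there is no analog of Proposition \ref{main prop} for general hyperspherical $\Delta$ --- the only structural input the paper gives is the conjectural Conjecture \ref{Whittaker induction dual}, which would account for the distinguished orbit but says nothing about the others; and (ii) the vanishing result of Sakellaridis in \cite{P} is proved only for reductive spherical varieties, and the extension you need to periods carrying Whittaker data and theta kernels is not in the literature. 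One notational wobble: your ``restricted quadruple'' $\Delta_i=(L_i,H_i,\iota_i,\rho_{H,i})$ should be a quadruple for $L$, not for $L_i$; what governs the $i$-th term is a period on $L$ attached to the stabilizer $L_i$ together with whatever Whittaker and theta data survive the transport by $\gamma_i$. Your remarks on the role of the anomaly-free hypothesis in determining $\chi$ are plausible but go beyond anything the paper asserts. In short, this is a reasonable research plan toward an open conjecture, not a proof, and the paper makes no stronger claim.
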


When $\Delta$ is tempered the above conjecture is trivial as $I(f)$ and $J(f')$ are identically the same. When it is not tempered, then the above conjecture compares its period integral to the period integral of a tempered quadruple associated to a Levi subgroup.

When the quadruple is a spherical variety (i.e. when $\iota=1$ and $\rho_{H}=0$), the above conjecture is just Theorem \ref{main theorem}. In this case, the quadruple $\widehat{(\hat{\Delta}_{red})}$ is equal to $(L,H_L,1,0)$ where $H_L$ is given as in Table \ref{Table 1} and \ref{Table 2}. This gives a conceptual explanation of $(L,H_L)$ for those spherical varieties in Table \ref{Table 1} and \ref{Table 2} that do not have Type N spherical root (we refer the reader to the last part of this section for a discussion of the case with Type N root).

When the quadruple is polarized, we believe it is possible to prove the comparison in the above conjecture by a similar unfolding argument as in the spherical variety case. For example, the computation in \cite{JQ} can be used to prove Conjecture \ref{main conj} for the quadruple $(\SO_{4n},\Sp_{2n},(2^{2n}),0)$ and the computation in \cite{PWZ} can be used to prove Conjecture \ref{main conj} for the quadruple $(E_6,G_2, A_2\times A_2, 0)$. Here $2^{2n}$ (resp. $A_2\times A_2$) denotes the nilpotent orbit of $\SO_{4n}$ (resp. $E_6$) that is principal in the Levi subgroup $\GL_{2}^{n}$ (resp. $\GL_3\times \GL_3$).

\begin{rmk}
In a previous paper by the author with Mao and Zhang \cite{MWZ1}, a conjectural relative trace formula comparison was made between any BZSV quadruple $\Delta$
 and a strongly tempered quadruple $\Delta'$. Although the conjectural comparison in \cite{MWZ1} is more powerful than the one in Conjecture \ref{main conj} as it reduces the study of period integrals to strongly tempered cases, it is very difficult to prove in general (so far it has only been established in some special lower-rank cases). On the other hand, the comparison we propose in Conjecture \ref{main conj}, while only reducing the study of period integrals to tempered cases, is relatively easier to prove, especially in the polarized case.
\end{rmk}

Next we will give a conceptual explanation of the first part of Proposition \ref{main prop}. To do this, we would need to recall a conjecture from \cite{BSV} about the relations between $\Delta=(G,H,\iota,\rho_H)$ and $\widehat{(\hat{\Delta}_{red})}$. We first need a definition.

\begin{defn}
Let $L$ be a Levi subgroup of $G$ and $\rho$ be an irreducible representation of $L$ with the highest weight $\varpi_L$. There exists a Weyl element $w$ of $G$ such that $w\varpi_L$ is a dominant weight of $G$ \footnote{the choice of $w$ is not unique but $w\varpi_L$ is uniquely determined by $\varpi_L$}. We define $(\rho)_{L}^{G}$ to be the irreducible representation of $G$ whose highest weight is $w\varpi_L$. In general, if $\rho=\oplus_i\rho_i$ is a finite-dimensional representation of $L$ with $\rho_i$ irreducible, we define
$$(\rho)_{L}^{G}=\oplus_i(\rho_i)_{L}^{G}.$$
\end{defn}
		
Now we are ready to state the conjecture. Let $\widehat{(\hat{\Delta}_{red})}=(L,H_L,\iota_L,\rho_{H_L})$.

\begin{conj}(Ben-Zvi--Sakellaridis--Venkatesh, Section 4.2.2 of \cite{BSV})\label{Whittaker induction dual}
With the notation above, the following holds.
\begin{enumerate}
\item $\iota=\iota_L$.
\item $H_L$ is a Levi subgroup of $H$ and $H$ is generated by $H_L$ and $\{Im(\iota_\alpha)\}$ where $\alpha$ runs over simple roots of $G$ that does not belong to $L$ and $\iota_\alpha$ is the $\SL_2$-embedding that maps the simple root of $\SL_2$ into $\alpha$.
\item $\rho_H=(\rho_{H_L})_{H_L}^{H}$.
\end{enumerate}
\end{conj}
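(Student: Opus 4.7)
The plan is to establish Conjecture \ref{Whittaker induction dual} by combining a case-by-case verification with structural analysis drawn from the known instances of BZSV duality. Since no general algorithm for computing dual hyperspherical quadruples exists at present, a fully conceptual proof is out of reach. Instead, I would verify the conjecture in the following regimes where the duality is accessible: the spherical case $\Delta=(G,H,1,0)$ using Tables \ref{Table 1} and \ref{Table 2}, the polarized case $\rho_H=\tau\oplus\tau^\vee$ using the algorithm of Section 4 of \cite{BSV}, and the vector-space case $\Delta=(G,G,1,\rho)$ using the algorithm of \cite{MWZ2}. Together these cover essentially every instance where both $\hat{\Delta}$ and $\widehat{(\hat{\Delta}_{red})}$ can currently be computed.

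For statement (1), the conceptual content is that Whittaker induction commutes with duality at the level of Langlands-dual Levi data. Concretely, the Levi $L\subset G$ determined by $\iota$ should satisfy $\hat{L}=\hat{L}'$, where $\hat{L}'\subset \hat{G}$ is the Levi determined by $\hat{\iota}'$. Under the passage $\hat{\Delta}\mapsto \hat{\Delta}_{red}$, the $\SL_2$-datum is absorbed into the symplectic representation $\rho_{\hat{H}',\hat{\iota}'}$, and dualizing on $L$ should recover $\iota$. In the spherical case this reduces to the assertion that $(L,H_L)$ is tempered, which is directly visible in the tables and matches Sakellaridis's identification of $L=L(X)$ via the spherical root system.

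For statements (2) and (3), I would exploit Proposition \ref{main prop}(1), which already identifies $H_L=H\cap L$ as the Levi of a parabolic subgroup $H_{i_0}$ of $H$ with unipotent radical $H\cap U$. The generating statement in (2) then follows because the simple roots of $H$ outside $H_L$ correspond precisely to the images of the $\iota_\alpha$ for simple roots $\alpha$ of $G$ not lying in $L$. For (3), the spherical case is trivial since $\rho_H$ and $\rho_{H_L}$ both vanish; in the polarized case the restriction $\tau|_{H_L}$ decomposes compatibly with the highest-weight extension $(\cdot)_{H_L}^{H}$, which can be checked by a weight computation combined with Kostant-type multiplicity formulas.

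The main obstacle is that the conjecture is a structural prediction about the duality map $\Delta\mapsto \hat{\Delta}$, which is itself only conjecturally defined in general. Consequently, even a case-by-case proof requires independently verifying the duality for each quadruple, and a conceptual proof must likely await the systematic description of duality suggested by the ongoing work for simple reductive groups mentioned in the introduction. In the interim, the combination of Proposition \ref{main prop}(1) and the explicit polarized and vector-space algorithms appears sufficient to establish the conjecture for all entries in Tables \ref{Table 1} and \ref{Table 2}.
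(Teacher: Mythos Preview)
The statement you are trying to prove is not proved in the paper at all: it is explicitly labeled a \emph{Conjecture}, attributed to Ben-Zvi--Sakellaridis--Venkatesh (Section~4.2.2 of \cite{BSV}), and the paper offers no proof. Its role in the paper is purely motivational: immediately after stating it, the author specializes to the spherical case $\Delta=(G,H,1,0)$ and observes that parts (1)--(3) would predict $\iota_L=1$, $\rho_{H_L}=0$, and that $H_L$ is a Levi of $H$ with $H\cap Q$ parabolic---exactly the content of Proposition~\ref{main prop}(1). In other words, the paper uses the conjecture to \emph{explain why} Proposition~\ref{main prop}(1) should hold, and then verifies that proposition directly by the case-by-case computations of Section~3. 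There is no proof of Conjecture~\ref{Whittaker induction dual} to compare your proposal against.

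Your proposal is candid about the core obstruction: the duality $\Delta\mapsto\hat{\Delta}$ is itself only conjecturally defined, so a general proof is not currently possible. What you outline is therefore not a proof of the conjecture but a program of verification in the accessible regimes (spherical, polarized, vector-space). That is a reasonable research program, but it does not settle the conjecture, and you should not present it as a proof. Note also a logical circularity in your plan for part (2): you invoke Proposition~\ref{main prop}(1) to deduce the Levi structure of $H_L\subset H$, but in the paper the flow of implication runs the other way---the conjecture is meant to predict Proposition~\ref{main prop}(1), which is then established independently. Using the proposition to verify the conjecture in the spherical case is legitimate as evidence, but it is not the conceptual derivation the conjecture calls for.
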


Now if we are in the spherical variety case (i.e. $\Delta=(G,H,1,0)$), the first and third parts of the above conjecture would imply that $\iota_L=1$ and $\rho_{H_L}=0$. In particular, the quadruple $\widehat{(\hat{\Delta}_{red})}$ is associated to a spherical variety $L/H_L$ of $L$. Moreover, the second part of the above conjecture implies that $H_L$ is a Levi subgroup of $H$ and $H$ is generated by $H_L$ and $\{Im(\iota_\alpha)\}$ where $\alpha$ runs over simple roots of $G$ that does not belong to $L$. Now if we let $Q=LU$ be the standard parabolic subgroup of $G$, then this would implies that $H\cap Q=(H\cap L)\ltimes (H\cap U)$ is a parabolic subgroup of $H$ and $H\cap L=H_L$. This gives a conceptual explanation of the first part of Proposition \ref{main prop}.

Lastly, we would like point out that the result we proved in Theorem \ref{main theorem} actually goes beyond the current setting of the relative Langlands duality. The reason is that in Theorem \ref{main theorem} we do not make the assumption that the spherical variety $X=G/H$ does not have Type N spherical root, while in the setting of the relative Langlands duality, in order for the quadruple $\Delta=(G,H,1,0)$ to be hyperspherical, the spherical variety $X=G/H$ can not have Type N spherical root. Among the models in Table \ref{Table 1} and \ref{Table 2}, Model 4, 9 of Table \ref{Table 1} and Model 5, 6 of Table \ref{Table 2} have Type N spherical root.

As a result, we believe the comparison in Conjecture \ref{main conj} should goes beyond anomaly-free hyperspherical Hamiltonian spaces. In fact, we believe we only need to assume that the Hamiltonian space is coisotropic. In the theory of relative Langlands duality, the generic stabilizer assumption in the definition of hyperspherical and the anomaly-free assumption ensure that there is no covering group involved in the duality \footnote{More precisely, if the generic stabilizer is not connected, one expects covering group appears in the dual quadruple. On the other hand, without the anomaly-free condition, the image of the symplectic representation $\rho_{H,\iota}$ does not split over the metaplectic cover and the period integral need to be taken over a covering group of $H$.}. However, we still expect that in this case one can define an analogue of the quadruple  $\widehat{(\hat{\Delta}_{red})}$ satisfying the conditions in Conjecture \ref{Whittaker induction dual} such that Conjecture \ref{main conj} holds. Actually we believe the Levi subgroup $L$ can be defined in the general coisotropic case (without the assumption on generic stabilizer and anomaly-free) using the generic stabilizer of $G$ acts on the Hamiltonian space. However it is not clear to us how to define the remaining datum in $\widehat{(\hat{\Delta}_{red})}$ for general coisotropic Hamiltonian spaces at this moment (except in the spherical variety case as in Table \ref{Table 1} and \ref{Table 2}). This is why we still assume anomaly-free and hyperspherical in Conjecture \ref{main conj}.

\subsection{Organization of the paper}
In Section 2, we will use the unfolding method to prove the main theorem by assuming Proposition \ref{main prop}. In Section 3, we will prove Proposition \ref{main prop} by a case-by-case computation argument.

\subsection{Acknowledgement} The author’s work is partially supported by the NSF grant DMS-2349836 and a Simons Travel Grant. 

\section{The proof of the relative trace formula identity}

In this section we will prove the main theorem (Theorem \ref{main theorem}) by assuming Proposition \ref{main prop}. Proposition \ref{main prop} will be proved in the next subsection. Recall that $G(k)=\cup_i Q(k)\gamma_i H(k)$ is the double coset decomposition (there maybe infinitely many orbits), $Q_i=Q\cap \gamma_i H\gamma_{i}^{-1}$, $H_i=H\cap \gamma_{i}^{-1}Q\gamma_i$ and $L_i$ is the projection of $Q_i$ to $L$.  We have
$$I(f)=\int_{[H]}\int_{[N]} K_f(h,n)\xi_L(n)dndh= \int_{[H]}\int_{[N_L]}\int_{[U]} K_f(h,un)\xi_L(n)dudndh$$
$$=\int_{[N_L]}\int_{[U]} \sum_{\delta\in H(k)\back G(k)} \int_{H(\BA)} f(h\delta un)  \xi_L(n)dhdudn$$
$$=\sum_i  \int_{[N_L]}\int_{[U]} \sum_{\delta\in H(k)\back H(k)\gamma_i Q(k)} \int_{H(\BA)} f(h\delta un)  \xi_L(n)dhdudn=\sum_{i}I_i(f).$$

\begin{prop}
When $i\neq i_0$, we have $I_i(f)=0$.
\end{prop}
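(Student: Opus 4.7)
The plan is to unfold $I_i(f)$ further along the Levi decomposition $Q=LU$ so that it becomes the relative trace formula for the pair $L_i\backslash L$ paired against the Whittaker datum $(N_L,\xi_L)$, then invoke Proposition \ref{main prop}(2) to conclude vanishing in both resulting subcases.

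Concretely, I would parametrize the summation set $\delta\in H(k)\backslash H(k)\gamma_i Q(k)$ via its identification with an $L_i(k)$-coset space in $L(k)$ fibered over $U(k)$-cosets, using the Levi decomposition of the stabilizer $Q_i = L_i \ltimes (Q_i\cap U)$. Collapsing the outer $[U]$-integral against the $U(k)$-sum into an integral over $U(\BA)$ and changing variables $u\mapsto lul^{-1}$ (which contributes a modular Jacobian), one arrives, up to precise modular characters, at
\begin{equation*}
I_i(f) \;=\; \int_{[N_L]} \int_{L_i(k)\backslash L(\BA)} \Phi_{i,f}(ln)\,\xi_L(n)\,dl\,dn,
\end{equation*}
where $\Phi_{i,f}$ is a Schwartz-type function on $L(\BA)$ built from $f$ by integration against $\gamma_i$, $H(\BA)$ and $U(\BA)$. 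This displays $I_i(f)$ as the relative trace formula of $L_i\backslash L/(N_L,\xi_L)$ applied to the test function $\Phi_{i,f}$.

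By Proposition \ref{main prop}(2), when $i\neq i_0$ the spherical variety $L/L_i$ is either not tempered or of parabolic induced type. In the non-tempered case, Sakellaridis's theorem (Theorem 3 of \cite{P}) forces the $L_i$-period to vanish identically on every generic automorphic form on $L(\BA)$; since the outer Whittaker integral $\int_{[N_L]}(\cdot)\xi_L(n)\,dn$ projects the spectral expansion of $\Phi_{i,f}$ onto its generic component, we obtain $I_i(f)=0$ for every Schwartz $f$. In the parabolic-induced case, $L_i$ contains the unipotent radical $V$ of a proper parabolic $P_0=MV$ of $L$; writing $L_i=M_i\ltimes V$ with $M_i\subset M$, the $L_i$-integration factors through $\int_{V(k)\backslash V(\BA)}\Phi_{i,f}(v\cdot)\,dv$, the $V$-constant term of $\Phi_{i,f}$. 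At a place $v_0$ where $f_{v_0}$ is $L$-supercuspidal, Fubini allows us to pull the $V$-integral past the $H$- and $U$-integrals defining $\Phi_{i,f,v_0}$, reducing the local $V$-constant term to the $V(F_{v_0})$-integral of (a translate of) a matrix coefficient of a supercuspidal representation of $L(F_{v_0})$, which vanishes. Hence $I_i(f)=0$ in this case as well.

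The principal obstacle is the bookkeeping in the unfolding step: tracking the modular characters and Jacobians so that the resulting expression is cleanly the relative trace formula of $L_i\backslash L/(N_L,\xi_L)$ with the correctly normalized test function $\Phi_{i,f}$. A secondary concern is justifying Fubini for the swap of the $V$-integral with the $H$- and $U$-integrals in the parabolic-induced case, where the local $L$-supercuspidality of $f_{v_0}$ enters crucially (giving compact support modulo the center). Conceptually, however, both steps should be routine once the unfolding identification is established.
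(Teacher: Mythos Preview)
Your overall strategy---unfold $I_i(f)$ into the relative trace formula for $L_i\backslash L/(N_L,\xi_L)$ and then split according to Proposition~\ref{main prop}(2)---matches the paper, and your treatment of the parabolic-induced case is essentially identical to the paper's (the paper isolates a local lemma showing that for $L$-supercuspidal $f$ the integral $\int_{U_i(F)\backslash U(F)}\int_{H(F)} f(h\gamma_i u)\,dh\,du$ vanishes, after rewriting it so that an inner $U_{L,i}(F)$-integral hits a supercuspidal matrix coefficient; your ``pull the $V$-integral inside'' is the same move).

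The substantive difference is in the non-tempered case. You invoke Sakellaridis's theorem spectrally: the $L_i$-period kills generic representations, and the Whittaker integral ``projects onto the generic component''. This is morally right but analytically loose---$\Phi_{i,f}$ is not an automorphic form, and justifying a spectral expansion of the relative trace here would require genuine work. The paper bypasses this entirely by using the same theorem of Sakellaridis in its geometric incarnation: when $L/L_i$ is not tempered, for \emph{every} $\delta\in L(k)$ the restriction of $\xi_L$ to $N_L\cap \delta^{-1}L_i\delta$ is nontrivial. One then writes
\[
I_i(f)=\sum_{\delta\in L_i(k)\backslash L(k)/N_L(k)}\int_{(N_L\cap\delta^{-1}L_i\delta)(k)\backslash N_L(\BA)}\Bigl(\cdots\Bigr)\xi_L(n)\,dn,
\]
and each inner integral vanishes because it is an integral of a nontrivial character over a compact abelian quotient. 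This gives the vanishing termwise, with no spectral input, and is the cleaner route.
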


\begin{proof}
We first consider the case when $L/L_i$ is of parabolic induced type. In this case the subgroup $L_i$ contains the unipotent radical $U_{L,i}$ of a proper parabolic subgroup of $L$. We have (here $U_i=U\cap \gamma_i H\gamma_{i}^{-1}$ and we will choose the Haar measure so that $vol(U_i(k)\back U_i(\BA))=1$)
$$I_i(f)=\int_{[N_L]}\int_{[U]} \sum_{\delta\in H(k)\back H(k)\gamma_i Q(k)} \int_{H(\BA)} f(h\delta un)  \xi_L(n)dhdudn$$
$$=  \int_{[N_L]} \sum_{\delta\in L_i(k)\back L(k)} \int_{U_i(k)\back U(\BA)} \int_{H(\BA)} f(h\gamma_i  u \delta n)  \xi_L(n)dhdudn.$$
$$=\int_{[N_L]} \sum_{\delta\in L_i(k)\back L(k)}\int_{U_i(\BA)\back U(\BA)}\int_{H(\BA)} f(h\gamma_i  u \delta n)  \xi_L(n)dhdudn.$$
Then it is enough to show that for any $l\in L(\BA)$, we have 
$$\int_{U_i(\BA)\back U(\BA)}\int_{H(\BA)} f_{v_0}(h\gamma_i  u l)  dhdu=0.$$
We only need to prove the vanishing of the above integral at a local place $v_0$. Hence it is enough to prove the following lemma.

\begin{lem}
Let $F$ be a p-adic local field and $f\in C_{c}^{\infty}(G(F))$. If $f$ is $L$-supercuspidal, then
$$\int_{U_i(F)\back U(F)} \int_{H(F)} f(h\gamma_i u)dhdu=0.$$
\end{lem}

\begin{proof}
Let $H'=H\cap \gamma_i U_{L,i} U\gamma_{i}^{-1}$. Then we can rewrite the integral as
$$\int_{H(F)/H'(F)} \int_{U(F)}\int_{U_{L,i}(F)} f(h\gamma_i uu')du' du dh.$$
The the vanishing follows from the integral over $\int_{U_{L,i}(F)}$.
\end{proof}

Next we consider the case when $L/L_i$ is not tempered. We have
$$I_i(f)= \int_{[N_L]} \sum_{\delta\in L_i(k)\back L(k)} \int_{U_i(k)\back U(\BA)} \int_{H(\BA)} f(h\gamma_i  u \delta n)  \xi_L(n)dhdudn$$
$$=\sum_{\delta\in L_i(k)\back L(k)/N_L(k)}  \int_{(N_L\cap \delta^{-1} L_i\delta)(k)\back N_L(\BA)} \int_{U_i(k)\back U(\BA)}\int_{H(\BA)} f(h\gamma_i u \delta n)  \xi_L(n)dhdudn.$$
By a result of Sakellaridis in Theorem 3 of \cite{P}, for any $\delta\in L$, the addtive character $\xi_L$ is nontrivial on $N_L\cap \delta^{-1} L_i\delta$. In particular the outer integral is zero for all $\delta$. This proves the proposition.
\end{proof}

Now it remains to study the term $I_{i_0}(f)$. For simplicity, we may assume that $\gamma_{i_0}=1$. By Proposition \ref{main prop}(1), we have $H_Q=Q\cap P=(H\cap L) \ltimes (H\cap U)=H_L\cap U_H$ is a parabolic subgroup of $H$. We have
$$I_{i_0}(f)=\int_{[N_L]}\int_{[U]} \sum_{\delta\in H(k)\back H(k) Q(k)} \int_{H(\BA)} f(h\delta un)  \xi_L(n)dhdudn$$
$$=\sum_{\delta\in H_L(k)\back L(k)} \int_{[N_L]}\int_{U_H(\BA)\back U(\BA)}\int_{H(\BA)} f(h u \delta n)  \xi_L(n)dhdudn $$
$$=\sum_{\delta\in H_L(k)\back L(k)} \int_{[N_L]} \int_{U_H(\BA)\back U(\BA)} \int_{U_{H}(\BA)} \int_{H_L(\BA)} \int_{K_H} f(klu' u \delta n)\xi_L(n)\delta_{H_Q}(l) dk dldu'dudn $$
$$=\sum_{\delta\in H_L(k)\back L(k)} \int_{[N_L]} \int_{H_L(\BA)} \int_{U(\BA)}\int_{K_H}   f(kl \delta n u)\xi_L(n)\delta_{H_Q}(l) dkdudldn$$
$$=\sum_{\delta\in H_L(k)\back L(k)} \int_{[N_L]}  F_{L,\chi}(f)(l \delta n)\xi_L(n) dl dn=I_L(F_{L,\chi}(f)).$$
Here we recall that 
$$\CF_\chi(f)(l)=\chi(l)\int_{U(\BA)}\int_{K_H}f(klu)dkdu$$
and $\chi$ is a character of $L$ whose restriction to $H_L$ is equal to $\delta_{H_Q}$. This finishes the proof of Theorem \ref{main theorem}.

\section{The proof of Proposition \ref{main prop}}
In this section we will prove Proposition \ref{main prop} for all the models in Table \ref{Table 1}.

For the first model, $(G,H)=(\GL_{2a+2k},\GL_a\times \GL_{2k})$ and $(L,H_L)=(\GL_{2a}\times \GL_{1}^{2k},\GL_{a}\times \GL_a\times \GL_{1}^{2k})$. We let $P_{p,q}$ (resp. $L_{p,q}$) be the standard upper triangular parabolic subgroup (resp. standard Levi subgroup) of $\GL_{p+q}$ of type $(p,q)$ and we first consider the double coset
$$P_{a,a+2k}\backslash \GL_{2a+2k}/P_{2a,2k}.$$
By the Bruhat decomposition this double coset has $min\{a,2k\}+1$ many orbits and the stabilizer of the orbit in $P_{a,a+2k}$ is of the form
$$\begin{pmatrix}\GL_{a-i} & \ast & \ast & \ast\\ 0 & \GL_i & 0 & \ast \\ 0 & 0 & \GL_{a+i} & \ast \\ 0 & 0 & 0 & \GL_{2k-i}  \end{pmatrix}$$
and the stablizer in $P_{2a,2k}$ is of the form
$$\begin{pmatrix}\GL_{a-i} & \ast & \ast & \ast\\ 0 & \GL_{a+i} & 0 & \ast \\ 0 & 0 & \GL_{i} & \ast \\ 0 & 0 & 0 & \GL_{2k-i}  \end{pmatrix}$$
with $0\leq i\leq min\{a,2k\}$. If we break into $L_{a,a+2k}\backslash \GL_{2a+2k}/P_{2a,2k}$ orbits, we just need to study the action of $\GL_i\times \GL_{a+i}$ on $Mat_{i\times (a+i)}$ and the orbits are given by the rank of the matrix. Namely, the orbits of $L_{a,a+2k}\backslash \GL_{2a+2k}/P_{2a,2k}$ are parametrized by $(i,j)$ with $i$ as before and $0\leq j\leq i$. For each $(i,j)$, the projection of the  stabilizer in $P_{2a,2k}$ to $L_{2a,2k}$ is given by 
$$diag(\begin{pmatrix}\GL_{a-i} & \ast & 0 \\ 0 & A & 0 \\ 0 & \ast & C \end{pmatrix},\begin{pmatrix} A & D & \ast \\ 0 & E & 0 \\ 0 & 0 & \GL_{2k-i}\end{pmatrix}),\; A\in \GL_j.$$

The last thing is to furthur decompose 
$$\GL_{2k}/\{\begin{pmatrix} A & D & \ast \\ 0 & E & 0 \\ 0 & 0 & \GL_{2k-i}\end{pmatrix})\}$$ 
into Borel orbits (recall that $Q$ is the parabolic subgroup of type $(2a,1^{2k})$). When $i=0$, there is only one orbit and it is clear that this orbit satisfies Proposition \ref{main prop}(1). When $i>0$, for each Borel orbit, the $A$-part of the stabilizer in $\{\begin{pmatrix} A & D & \ast \\ 0 & E & 0 \\ 0 & 0 & \GL_{2k-i}\end{pmatrix})\}$ is a Borel subgroup. Hence the projection of the stabilizer to $\GL_{2a}\subset L=\GL_{2a}\times \GL_{1}^{2k}$ is 
$$\begin{pmatrix}\GL_{a-i} & \ast & 0 \\ 0 & A & 0 \\ 0 & \ast & C \end{pmatrix},\;A\in B_j$$
where $B_j$ is a Borel subgroup of $\GL_j$. If $j>0$,
the stabilizer contains the unipotent radical $\begin{pmatrix}I_{a-i}& \ast & 0 \\ 0 & I_j & 0 \\ 0 & \ast & I_{a+i-j} \end{pmatrix}$ of a proper parabolic subgroup of $\GL_{2a}$. If $j=0$, the stabilizer is $\GL_{a-i}\times \GL_{a+i}\subset \GL_{2a}$ which is not tempered when $i>0$. This proves Proposition \ref{main prop}.

For the second model, $(G,H)=(\GL_{2a+2k+1},\GL_a\times \GL_{2k+1})$ and $(L,H_L)=(\GL_{2a+1}\times \GL_{1}^{2k},\GL_{a+1}\times \GL_a\times \GL_{1}^{2k})$. The calculation is very similar to the first model and we will skip it here.

For the third model, $(G,H)=(\GL_{2n},\Sp_{2n})$ and $(L,H_L)=(\GL_n\times \GL_n,\GL_n)$. In this case, $G/Q$ corresponds to all the $n$-dimensional subspaces of a $2n$-dimensional vector space. It is clear that $H=\Sp_{2n}$-orbits on it are parametrized by the dimension of the maximal non-degenerate symplectic subspace of the $n$-dimensional subspace (which can be $2i$ with $0\leq i\leq [n/2]+1$, in particular there are $[n/2]+1$ many orbits). If $i=0$ (i.e. the $n$-dimensional subspace is isotropic), then it is clear that this orbit satisfies Proposition \ref{main prop}(1). If $i>0$, the stabilizer $L_i$ is given by
$$\begin{pmatrix}A &B_1\\ 0&C_1 \end{pmatrix}\times \begin{pmatrix}A^\ast &B_2\\ 0&C_2 \end{pmatrix},\; A\in GL_{n-2i}, A^\ast=(A^t)^{-1},\;B_i\in Mat_{(n-2i)\times 2i}, C_i\in Sp_{2i}$$
and is not tempered (this is because the spherical variety $\GL_{2i}/\Sp_{2i}$ is not tempered). This proves Proposition \ref{main prop}. In this case since $L_i$ is not tempered for $i\neq 0$, we can remove the locally L-supercuspidal condition in Theorem \ref{main theorem}.

For Model 4, $(G,H)=(\Sp_{4m+2k},\Sp_{2m}\times \Sp_{2m+2k})$ and $(L,H_L)=(\GL_{2m}\times \GL_{1}^{k},\GL_m\times \GL_m\times \GL_{1}^{k})$. We first consider the double coset $Q'\back G/H$ with respect to the parabolic $Q'$ containing $Q$ whose Levi factor is $\GL_{2m}\times \Sp_{2k}$. The double coset corresponds to the action of $\Sp_{2m}\times \Sp_{2m+2k}$ on $2m$-dimensional isotropic subspaces. When $k=0$, the calculation has been done in Lemma 2.1 of \cite{GPSS}. The calculation for the general case is similar.

Let $V=V_1\oplus V_2$ be the symplectic space defining $G$ with $V_1$ (resp. $V_2$) corresponds to the $2m$-dimensional (resp. $2m+2k$-dimensional) subspace defining the $\Sp_{2m}$-part (resp. $\Sp_{2m+2k}$-part) of $H$. Let $W$ be a $2m$-dimensional isotropic subspace, $k_i=\dim(W\cap V_i)$. Then $2m-k_2$ (resp. $2m-k_1$) are the dimension of the projection of $W$ on $V_i$ (we use $W_i$ to denote this space). Since $W$ is isotropic, the dimension of the maximal anisotropic subspace of $W_1$ and $W_2$ are equal to each other, we set this number to be $2k_3$. These three numbers need to satisfies some conditions. 

\begin{itemize}
\item It is clear that $0\leq k_1\leq m$, $0\leq k_2\leq m+k$, and $k_1+k_2\leq 2m$.
\item Since $W\cap V_i$ is orthogonal to $W_i$, its dimension must be less or equal to $W_{i}^{\perp}$ where $W_{i}^{\perp}$ is suspace of vectors in $V_i$ that are orthogonal to $W_i$. This implies that $2m-k_2\leq 2m-k_1$ and $2m-k_1\leq 2m+2k-k_2$. Hence we have
$$0\leq k_2-k_1\leq 2k.$$
\item Since $W\cap V_i$ is orthogonal to $W_i$ and $2k_3$ is the dimension of the maximal anisotropic subspace of $W_i$, we have $2k_3\leq \dim(W_i)-\dim(W\cap V_i)$ which implies that 
$$2k_3\leq 2m-k_1-k_2.$$
\item Since the maximal isotropic subspace of $W_i$ has dimension $\dim(W_i)-2k_3$, which needs to be less or equal to the dimension of the maximal isotropic subspace of $V_i$ minus $k_3$ (here $k_3$ is the dimension of the maximal isotropic subspace of the maximal anisotropic subspace of $W_i$). This implies that $2m-k_2-2k_3\leq m-k_3$ and $2m-k_1-2k_3\leq m+k-k_3$. Hence we have
$$k_3\geq m-k_2,\;k_3\geq m-k-k_1.$$

\end{itemize}

\begin{rmk}\label{sp remark}
In this special case when $k=0$, the second inequality implies that $k_1=k_2$, and the last two inequalities imply that $k_3=m-k_1$. Hence the only invariant is just $k_1$, which is exactly Lemma 2.1 of \cite{GPSS}.    
\end{rmk}

By an argument analogous to Lemma 2.1 in \cite{GPSS}, the orbits in $Q' \backslash G/H$ are parametrized by the integers $k_1, k_2, k_3$ satisfying the conditions above. Specifically, for each such triple, the $2m$-dimensional isotropic subspace $W \subset V$ decomposes as
$$W=W_1\oplus W_2\oplus W_{iso,diag}\oplus W_{aniso,diag}$$
Here, $W_i \subset V_i$ is an isotropic subspace of dimension $k_i$. The remaining components are defined as follows.
\begin{itemize}
\item $W_{\mathrm{iso,diag}}$ is a subspace of $W_{\mathrm{iso},1} \oplus W_{\mathrm{iso},2}$ that projects surjectively onto each $W_{\mathrm{iso},i}$, where $W_{\mathrm{iso},i} \subset V_i$ is an isotropic subspace of dimension $2m - k_1 - k_2 - 2k_3$.
\item $W_{\mathrm{aniso,diag}}$ is a subspace of $W_{\mathrm{aniso},1} \oplus W_{\mathrm{aniso},2}$ that projects surjectively onto each $W_{\mathrm{aniso},i}$, where $W_{\mathrm{aniso},i} \subset V_i$ is an anisotropic subspace of dimension $2k_3$.
\end{itemize}

As for the stabilizers, for each $(k_1,k_2,k_3)$, the projection of the stabilizer in $Q'$ to the Levi subgroup $\GL_{2m}\times \Sp_{2k}$ is of the form
\begin{equation}\label{sp comuptation}
\begin{pmatrix}\GL_{k_1} & 0 & \ast & \ast \\ 0 & \GL_{k_2} & \ast & \ast \\ 0 & 0 & \Sp_{2k_3} & \ast \\ 0 & 0 & 0 & \GL_{2m-k_1-k_2-2k_3}^{diag} \end{pmatrix}
\end{equation}
$$\times \begin{pmatrix}\GL_{2m-k_1-k_2-2k_3}^{diag} & \ast & \ast \\ 0 & diag(\Sp_{2k_2+2k_3-2m}\times \Sp_{2k_1+2k_3+2k-2m}) & \ast\\ 0 & 0 & (\GL_{2m-k_1-k_2-2k_3}^{diag})^\sharp \end{pmatrix},\;g^\sharp=(g^t)^{-1}.$$
When we future take the Borel orbits of the second component in $\Sp_{2k}$ to get the orbits for $Q\back G/H$, the projection of the stabilizer of each orbit to $\GL_{2m}$ becomes
$$\begin{pmatrix}\GL_{k_1} & 0 & \ast & \ast \\ 0 & \GL_{k_2} & \ast & \ast \\ 0 & 0 & \Sp_{2k_3} & \ast \\ 0 & 0 & 0 & B_{\GL_{2m-k_1-k_2-2k_3}}\end{pmatrix}.$$
When $2m-k_1-k_2-2k_3\neq 0$, the stabilizer contains the unipotent radical of a proper parabolic subgroup of $\GL_{2m}$. When $k_3\neq 0$ (resp. $|k_1-k_2|> 1$), the subgroup is not tempered since $\Sp_{2k_3}\subset \GL_{2k_3}$ (resp. $\GL_{k_1}\times \GL_{k_2}\subset \GL_{k_1+k_2}$) is not tempered. It remains to consider the case when $k_3=2m-k_1-k_2-2k_3=0$ and $|k_1-k_2|\leq 1$. In this case, since $k_1+k_2=2m$ we must have $k_1=k_2=m$. Moreover, in this case the second component of \eqref{sp comuptation} is just $\Sp_{2k}$ and hence it only breaks into one Borel orbit. This is exactly the orbit that satisfies Proposition \ref{main prop}(1). This proves Proposition \ref{main prop}.

\begin{rmk}
In the special case when $k=0$, as we explained in Remark \ref{sp remark}, $2m-k_1-k_2-2k_3$ must always be $0$ and hence $L_i$ is not tempered for all $i\neq i_0$. In particular we can remove the locally L-supercuspidal condition in Theorem \ref{main theorem}. This is the case considered by Mao and Rallis in \cite{MR}.
\end{rmk}

For Model 5, $(G,H)=(\Sp_{2n},\Sp_{2n-2}\times \GL_1)$ and $(L,H_L)=(\Sp_4\times \GL_{1}^{n-2},\Sp_2\times \GL_{1}^{n-1})$. In this case, let $W$ be the $2n$-dimensional symplectic space defining $G$. The double coset $Q\back G/H$ corresponds to the action of $Q$ on the set of two lines of $W$ that are not orthogonal to each other. We can decompose the symplectic space $W$ as $W_{n-2,+}\oplus W_4\oplus W_{n-2,-}$ where $V_4$ corresponds to the $\Sp_4$-part of $L$ and $W_{n-2,+},W_{n-2,-}$ are the isotropic subspaces of dimension $n-2$ that corresponds to the $\GL_{1}^{n-2}$-part of $L$ (in particular the parabolic subgroup $Q$ preserves the isotropic subspace $W_{n-2,+}$). If the projection of the two lines on $W_{n-2,-}$ is two-dimensional, then we can use the unipotent element in $Q$ to make it belong to $W_{n-2,+}\oplus W_{n-2,-}$ and hence the projection of the stabilizer to $\Sp_{4}$-part of $L$ is $\Sp_4$, which is not tempered. 

If the projection of the two lines on $W_{n-2,-}$ is one-dimensional, then we can use the unipotent element in $Q$ to make one line belong to $W_{n-2,+}\oplus W_{n-2,-}$. Then the projection of the stabilizer to $\Sp_{4}$-part of $L$ contains the mirabolic subgroup of $\Sp_4$, which is not tempered. 

If the projection of the two lines on $W_{n-2,-}$ is zero-dimensional, then its projection on $V_4$ is two-dimensional (because these two lines are not orthogonal to each other and $W_{n-2,+}$ is isotropic) and we can use unipotent element in $Q$ to make it in $W_4$. Since $\Sp_4$ acts transitively on the set of  two lines of $W_4$ that are not orthogonal to each other, those two lines (i.e. the set of two lines whose projection on $W_{n-2,-}$ is zero-dimensional) form one $Q$-orbit and it is clear that it satisfies Proposition \ref{main prop}(1). This proves Proposition \ref{main prop}. Moreover, in this case $L_i$ is not tempered for $i\neq i_0$ and therefore we can remove the locally L-supercuspidal condition in Theorem \ref{main theorem}.

For Model 6, $(G,H)=(\SO_{2k+2m+1},\SO_m\times \SO_{m+2k+1})$ and $(L,H_L)=(\SO_{2m+1}\times \GL_{1}^{k},\SO_{m}\times \SO_{m+1}\times \GL_{1}^{k})$. The argument will be similar to Model 4. We first consider the double coset for $Q'\back G/H$ with respect to the parabolic $Q'$ containing $Q$ whose Levi factor is $\SO_{2m+1}\times \GL_{k}$. This is similar to the symplectic case in Model 4, we can decomposition the quadratic space defining $G$ as $V=V_1\oplus V_2$ with $\dim(V_1)=m$ and $\dim(V_2)=m+2k+1$ and let $W$ be a $k$-dimensional isotropic subspace. Let $k_i=\dim(W\cap V_i)$. Then $k-k_2$ (resp. $k-k_1$) is the dimension of the projection of $W$ on $V_i$ (we use $W_i$ to denote this space). Since $W$ is isotropic, the dimension of the maximal anisotropic subspace of $W_1$ and $W_2$ are equal to each other, we set this number to be $k_3$. Similar to the symplectic case, these three numbers need to satisfy the following inequalities 
$$0\leq k_1\leq [m/2],\;0\leq k_2\leq [\frac{m+2k+1}{2}],\;k_1+k_2\leq k,\;k-m\leq k_2-k_1\leq m+k+1,\;k_3\leq k-k_1-k_2,$$
$$k_3\geq 2k-2k_2-m.$$
Moreover, unlike the symplectic case in which all the $2k_3$-dimensional symplectic subspaces are conjugated to each other, in the orthogonal case, the orbits are parametrized by the numbers $(k_1,k_2,k_3)$ together with a non-denegerate orthogonal subspace $V_0$ of $V_1$ of dimension $k_3$ (up to $\SO(V_1)-$conjugation). When $0<k_3<m$, there will be infinitely many such $V_0$ and hence there are infinitely many orbits. For each $(k_1,k_2,k_3)$ and $V_0$, the projection of the stabilizer in $Q'$ to the Levi subgroup $\GL_k\times \SO_{2m+1}$ is
\begin{equation}\label{so equation}
\begin{pmatrix}\GL_{k_1} & 0 & \ast & \ast \\ 0 & \GL_{k_2} & \ast & \ast \\ 0 & 0 & \SO(V_0) & \ast \\ 0 & 0 & 0 & \GL_{k-k_1-k_2-k_3}^{diag} \end{pmatrix}
\end{equation}
$$\times \begin{pmatrix}\GL_{k-k_1-k_2-k_3}^{diag} & \ast & \ast \\ 0 & diag(\SO_{m-2k+2k_2+k_3}\times \SO_{m+1+2k_1+k_3}) & \ast\\ 0 & 0 & (\GL_{k-k_1-k_2-k_3}^{diag})^\ast \end{pmatrix}.$$

When we futhur take Borel orbits of the first component in $\GL_k$ (note that when $k_3>2$ there are infinitely many Borel orbits) to get the orbits for $Q'\back G/H$, the projection of the stabilizer of each orbit to the $\SO_{2m+1}$-part of $L$ becomes 
$$\begin{pmatrix}B_{k-k_1-k_2-k_3}^{diag} & \ast & \ast \\ 0 & diag(\SO_{m-2k+2k_2+k_3}\times \SO_{m+1+2k_1+k_3}) & \ast\\ 0 & 0 & (B_{k-k_1-k_2-k_3}^{diag})^\ast \end{pmatrix}.$$

We have $(m+1+2k_1+k_3)-(m-2k+2k_2+k_3)=2k+2k_1-2k_2+1$. If this number is greater than 2, than it is not tempered because $\SO_{2a+b}/\SO_a\times \SO_{a+b}$ is not tempered when $b>2$. If this number is less or equal to $2$, since it is an odd number, we must have $2k+2k_1-2k_2+1=1$ which means that $k=k_2-k_1$. Since $0\leq k_1,k_2\leq k$, this is only possible when $k_1=0$ and $k_2=k$. In this case $k_3$ also needs to be $0$. Moreover, in this case the first component of \eqref{so equation} is just $\GL_k$ and hence it only breaks into one Borel orbit. This is exactly the orbit that satisfies Proposition \ref{main prop}(1). This proves Proposition \ref{main prop}. Lastly, in this case $L_i$ is not tempered for $i\neq i_0$ and therefore  we can remove the locally L-supercuspidal condition in Theorem \ref{main theorem}.

For Model 7,  $(G,H)=(\SO_{2k+2m},\SO_m\times \SO_{m+2k})$ and $(L,H_L)=(\SO_{2m+2}\times \GL_{1}^{k-1},\SO_{m}\times \SO_{m+2}\times \GL_{1}^{k-1})$. The calculation is very similar to the previous model and we will skip it here. As in the previous case, in this case $L_i$ is not tempered for $i\neq i_0$ and therefore  we can remove the locally L-supercuspidal condition in Theorem \ref{main theorem}.

For Model 8, $(G,H)=(\SO_{4n},\GL_{2n})$ and $(L,H_L)=(\GL_{2n},\GL_n\times \GL_n)$. When $n$ is even (resp. odd), $H=L$ is a Levi subgroup of the parabolic of $Q$ (resp. $H=\theta(L)$ where $\theta$ is the outer automorphism of $G$). We will consider the case when $n$ is even, the odd case is similar. We first consider the double coset $Q\back G/Q.$ By the Bruhat decomposition, this has $n+1$ many orbits. For $0\leq i\leq n+1$, the stabilizer in $Q$ is of the form
$$\begin{pmatrix}\GL_{2n-2i} & \ast & \ast & \ast\\ 0 & \GL_{2i} & 0 & \ast \\ 0 & 0 & \GL_{2i} & \ast \\ 0 & 0 & 0 & \GL_{2n-2i}  \end{pmatrix}.$$
If we break into $Q\back G/ H=Q\back G/L$-orbits, we just need to study the action of $\GL_{2i}$ on antisymmetric $2i\times 2i$-matrix for which the orbits are given by the rank of the matrix. As a result, the orbits of $Q\back G/ H=Q\back G/L$ are given by $(i,j)$ with $i$ as before and $0\leq j\leq i$. The projection of the stabilizer in $Q$ to $L$ is given by 
$$diag(g,g^\sharp),\;g\in \begin{pmatrix}\GL_{2n-2i} & 0 & \ast \\ 0& \GL_{2i-2j} & \ast \\ 0 & 0 &\Sp_{2j}   \end{pmatrix},\;g^\sharp=(g^t)^{-1}.$$
Unless $j=0$ and $i=n/2$, the stabilizer is not tempered because $\Sp_{2j}\subset \GL_{2j}$ (when $j\neq 0$) and $\GL_{2n-2i}\times \GL_{2i}\subset \GL_{2n}$ (when $j=0$) are not tempered. The only orbit left is when $j=0$ and $i=n/2$, this is exactly the orbit that satisfies Proposition \ref{main prop}(1). This proves Proposition \ref{main prop}. In this case $L_i$ is not tempered for $i\neq i_0$ and therefore  we can remove the locally L-supercuspidal condition in Theorem \ref{main theorem}.

For Model 9, $(G,H)=(\SO_{4n+2},\GL_{2n+1})$ and $(L,H_L)=(\GL_{2n}\times \GL_1,\GL_n\times \GL_n\times \GL_1)$. The calculation is very similar to the previous model and we will skip it here. As in the previous case, in this case $L_i$ is not tempered for $i\neq i_0$ and therefore  we can remove the locally L-supercuspidal condition in Theorem \ref{main theorem}.

For Model 10-12, Proposition \ref{main prop} has been proved in Lemma 2.1 of \cite{J}. Moreover, for Model 10, according to the computation in loc. cit., $L_i$ is not tempered for $i\neq i_0$ and therefore  we can remove the locally L-supercuspidal condition in Theorem \ref{main theorem}.

For Model 13, $(G,H)=(\GSO_{10},\GSpin_7)$ and $(L,H_L)=(\GSO_6\times \GL_2,\GL_2\times \GL_2)$. In this case, the double coset $Q\back G/H$ has been studied in Section 9 of \cite{PWZ}, and Proposition \ref{main prop} follows from Proposition 9.12, Lemma 9.13, Lemma 9.15 and Proposition 9.18 of \cite{PWZ2} \footnote{Here we refer the reader to the Arxiv version of \cite{PWZ2} in \url{https://arxiv.org/pdf/1903.02544} instead of the published version. The reason is that in the final published version, by using the relative truncation operator instead of the Arthur's truncation operator, we only need to consider the closed orbits the double coset $Q\back G/H$ and hence we do not include the computation of all the orbits in published version.} (with the notation as in loc. cit., the orbit $Q\gamma_{i_0}H$ in Proposition \ref{main prop}(1) corresponds to the case when the two dimensional isotropic subspace $W$ is contained in the Octonian algebral $\Theta$ and is null). 

For Model 14, $(G,H)=(\SO_9,\Spin_7)$ and $(L,H_L)=(\SO_5\times \GL_2,\Spin_3\times \GL_2)$. In this case, Proposition \ref{main prop} can be proved by a similar but easier argument as Model 13 \footnote{In Model 13, the orbits correspond to the action of $\Spin_7$ on the set of two dimensional isotropic subspaces of $\Theta\oplus k^2$; for Model 14, the orbits correspond to the action of $\Spin_7$ on the set of two dimensional isotropic subspaces of $\Theta\oplus k$ which is easier.} and we will skip it here.

For Model 15 (resp. Model 16), $(G,H)=(\GL_{2n+2}\times \GL_2,\GL_{2n}\times \GL_2)$ (resp. $(G,H)=(\GL_{2n+1}\times \GL_2,\GL_{2n-1}\times \GL_2)$) and $(L,H_L)=(\GL_4\times \GL_{1}^{2n-2}\times \GL_2,\GL_2\times \GL_2\times \GL_{1}^{2n-2})$ (resp. $(L,H_L)=(\GL_5\times \GL_{1}^{2n-4}\times \GL_2,\GL_3\times \GL_2\times \GL_{1}^{2n-4})$). The calculation of the double coset $Q\back G/H$ follows from the case of Model 1 when $a=2$ and $k=n-1$ (resp. Model 2 when $a=2$ and $k=n-2$), the only difference is that we need to view the stabilizer in $\GL_4$ (resp. $\GL_5$) computed in Model 1 as a subgroup of $\GL_4\times \GL_2$ (resp. $\GL_5\times \GL_2$). By the computation in Model 1 (resp. 2), for all but one orbit, the stabilizer will either contains $\GL_3\subset \GL_4$ (resp. $\GL_4\subset \GL_5$) which is not tempered, or contains a unipotent radical of a proper parabolic subgroup of $\GL_4$ (resp. $\GL_5$). The remaining orbit is precisely the one satisfying Proposition \ref{main prop}(1). This completes the proof of Proposition \ref{main prop}.

Before we consider the remaining models, we first study the double coset
\begin{equation}\label{sp double coset}
Q'\back \Sp_{2p+2}/\Sp_{2p}\times \Sp_2
\end{equation}
because we are going to use it for all but one remaining model. Here $Q'$ is the parabolic subgroup of $\Sp_{2p+2}$ whose Levi factor is $L'=\Sp_4\times \GL_{1}^{p-1}$. This is the same as the action of $Q'$ on the 2-dimensional nondegenerate subspaces $V_0$ of $V$ where $V$ is symplectic space defining $\Sp_{2p+2}$. Using the Levi subgroup $L'=\Sp_4\times \GL_{1}^{p-1}$ we can decompose $V$ as $W_4\oplus W_+\oplus W_-$ where $W_+,W_-$ are isotropic subspaces of dimension $p-1$ and $W_4$ is an anisotropic subspace of dimension $4$ (in particular $Q'$ preserves the isotropic subspace $W_+$). If the projection of $V_0$ on $W_-$ is two dimensional, we can use the unipotent elements in $Q'$ to make $V_0$ contained in $W_+\oplus W_-$ and the stabilizer in $Q'$ would contain the whole $\Sp_4$-part of $L'$. If the projection of $V_0$ on $W_-$ is one dimensional, we can use unipotent elements in $Q'$ to make $\dim(V_0\cap (W_-\oplus W_+))\geq 1$ which implies that the projection of $V_0$ on $W_4$ is at most one dimensional. Then the stabilizer in $Q'$ contains the mirabolic subgroup of $\Sp_4$. Note that the first two cases correspond to more than one orbit. Moreover, in these two cases the stabilizer all contains the mirabolic subgroup of $\Sp_4$ that acts trivially on vectors in $V_0$, which implies that it has trivial intersection with the $\Sp_2$-part of $\Sp_{2p}\times \Sp_2$. If the projection of $V_0$ on $W_-$ is zero, then we can use the unipotent elements in $Q'$ to make $V_0$ contained in $W_4$. Since $\Sp_4$ acts transitively on the set of 2-dimensional nondegenerate subspaces of $W_4$, there is only one orbit in this case and the projection of the stabilizer in $Q'$ to the $\Sp_4$-part of the Levi is $\Sp_2\times \Sp_2$.

Now we can consider the remaining models. For Model 17 (resp. Model 20, Model 23, Model 25), we have
$$(G,H)=(\Sp_{2p+2}\times \Sp_2,\Sp_2\times \Sp_{2p}),\;(L,H)=(\Sp_{4}\times \Sp_2\times \GL_{1}^{p-1},\Sp_2\times \Sp_{2}\times \GL_{1}^{p-1}),$$ 
$$\text{resp.}\;(G,H)=(\GL_{p+2}\times \Sp_{2q+2},\GL_p\times \SL_2\times \Sp_{2q}),\; (L,H_L)=\GL_{p+2}\times \Sp_{4}\times \GL_{1}^{q-1},\GL_p\times \SL_2\times \SL_2\times \GL_{1}^{q-1}), $$
$$(G,H)=(\Sp_4\times \Sp_{2p+2}\times \Sp_{2},\Sp_{2}^2\times \Sp_{2p}),\;(L,H_L)=(\Sp_{4}^{2}\times \Sp_{2}\times \GL_{1}^{p-1},\Sp_{2}^{3}\times \GL_{1}^{p-1}),$$
$$(G,H)=(\Sp_{2p+2}\times \Sp_{2}^{2},\Sp_{2p}\times \Sp_2),\; (L,H_L)=(\Sp_{4}\times \Sp_{2}^{2}\times \GL_{1}^{p-1},\Sp_{2}^{2}\times \GL_{1}^{p-1}).$$
It is clear that the double coset $Q\back G/H$ is in bijection with the double coset \eqref{sp double coset}. Moreover, by our computation of the stabilizers for the double coset \eqref{sp double coset}, we know that for all but one orbit, $L_i$ contains the mirabolic subgroup of $\Sp_4$ which is not tempered. It is easy to see that the remaining orbit is precisely the one that satisfies Proposition \ref{main prop}(1). This completes the proof of Proposition \ref{main prop}. In this case $L_i$ is not tempered for $i\neq i_0$ and therefore  we can remove the locally L-supercuspidal condition in Theorem \ref{main theorem}.

For Model 18, we have $(G,H)=(\Sp_{2p+2}\times \Sp_{2q+2},\Sp_2\times \Sp_{2p}\times \Sp_{2q})$ and $(L,H_L)=(\Sp_4\times \Sp_4\times \GL_{1}^{p+q-2},\Sp_{2}^{3}\times \GL_{1}^{p+q-2})$. Let $H'=\Sp_{2}^2\times \Sp_{2p}\times \Sp_{2q}$ be the subgroup of $G$ containing $H$. We first consider the double coset $Q\back G/H'$ and then breaks each orbit into $H$-orbits. The double coset $Q\back G/H'$ is just two copies of the double coset \eqref{sp double coset}. By our computation of the stabilizers for the double coset \eqref{sp double coset}, we know that for all but one orbits in $Q\back G/H'$, the projection of the stabilizer of $Q$ in $L$ would contain the mirabolic subgroup of at least one $\Sp_4$-copy. Moreover, such a mirabolic subgroup has trivial intersection with the $\Sp_{2}^{2}$-part of $H'$ (this is the only difference between $H$ and $H'$). As a result, when we breaks thoes orbits into $H$-orbits, the stabilizer $L_i$ would also contain the mirabolic subgroup of at least one $\Sp_4$-copy, which is not tempered. For the last orbit in $Q\back G/H'$, the stabilizer in $H'$ contains $\Sp_{2}^{2}$ and hence breaks into only one $H$-orbit. It is clear that this orbit satisfies Proposition \ref{main prop}(1). This proves Proposition \ref{main prop}. The same argument can also be used to prove Proposition \ref{main prop} for Model 24, 26 and 27, and we will skip the details here. For all these models, $L_i$ is not tempered for $i\neq i_0$ and therefore  we can remove the locally L-supercuspidal condition in Theorem \ref{main theorem}.

For Model 19, $(G,H)=(\Sp_{2p+4}\times \Sp_4,\Sp_{2p}\times \Sp_4)$ and $(L,H_L)=(\Sp_8\times \GL_{1}^{p-2}\times \Sp_4,\Sp_{4}^{2}\times \GL_{1}^{p-2})$ with $p>2$. The double coset $Q\back G/H$ is the same as the double coset
\begin{equation}\label{sp double 2}
Q_1\back \Sp_{2p+4}/\Sp_{2p}\times \Sp_4
\end{equation}
where $Q_1$ is the parabolic subgroup of $\Sp_{2p+4}$ whose Levi factor is $L_1=\Sp_8\times \GL_{1}^{p-2}$ (in particular $Q=Q_1\times \Sp_4$). We first consider the double coset
\begin{equation}\label{sp double 3}
Q_1'\back \Sp_{2p+4}/\Sp_{2p}\times \Sp_4
\end{equation}
where $Q_1'$ is the parabolic subgroup of $\Sp_{2p+4}$ whose Levi factor is $L_1'=\Sp_8\times \GL_{p-2}$. This is similar to the computation for Model 4. Let $V$ be the $2p+4$-dimensional symplectic space defining $\Sp_{2p+4}$ and $V=V_1\oplus V_2$ be the decomposition induced by the subspace $\Sp_{2p}\times \Sp_4\subset \Sp_{2p+4}$ with $\dim(V_1)=4$ and $\dim(V_2)=2p$. The double coset \ref{sp double 3} corresponds to the orbits of $\Sp_{2p}\times \Sp_4$ acts on the set of $(p-2)$-dimensional isotropic subspaces. Let $W\subset V$ be a $(p-2)$-dimensional isotropic subspace. As in Model 4, we let $k_i=\dim(W\cap V_i)$, $W_i$ be the projection of $W$ on $V_i$, and $2k_3$ be the dimension of the maximal anisotropic subspace of $W_i$. Similar to the computation in Model 4, the double cosets \eqref{sp double 3} are parametrized by the number $k_1,k_2,k_3$ that satisfy the following inequalities
$$0\leq k_1\leq 2, 0\leq k_2\leq p,p-6\leq k_2-k_1, k_1+k_2+2k_3\leq p-2,k_3\geq p-4-k_2.$$
It is easy to see that there are only 10 choices of $(k_1,k_2,k_3)$ satisfy the inequalities above:
$$(k_1,k_2,k_3)=(2,p-4,0),\;(1,p-3,0),\;(1,p-4,0),\;(1,p-5,1),\;(0,p-2,0),\;(0,p-3,0),$$
$$(0,p-4,0),\;(0,p-4,1),\;(0,p-5,1),\;(0,p-6,2)$$
where $(2,p-4,0),(1,p-4,0),(0,p-4,0),(0,p-4,1)$ (resp. $(1,p-5,1),(0,p-5,1)$) only appear when $p>3$ (resp. $p>4$) and $(0,p-6,2)$ only appears when $p>5$. We will only consider the case when $p>5$. The cases when $3\leq p\leq 5$ have less orbits and hence are easier.

For each $(k_1,k_2,k_3)$, the projection of the stabilizer in $Q_1'$ to $L_1'=\GL_{p-2}\times \Sp_8$ is of the form
\begin{equation}\label{sp comuptation 2}
\begin{pmatrix}\GL_{k_1} & 0 & \ast & \ast \\ 0 & \GL_{k_2} & \ast & \ast \\ 0 & 0 & \Sp_{2k_3} & \ast \\ 0 & 0 & 0 & \GL_{p-2-k_1-k_2-2k_3}^{diag} \end{pmatrix}
\end{equation}
$$\times \begin{pmatrix}\GL_{p-2-k_1-k_2-2k_3}^{diag} & \ast & \ast \\ 0 & \Sp_{2k_2+2k_3+8-2p}\times \Sp_{4+2k_1+2k_3} & \ast\\ 0 & 0 & (\GL_{p-2-k_1-k_2-2k_3}^{diag})^\sharp \end{pmatrix},\;g^\sharp=(g^t)^{-1}$$
and the stabilizer in $\Sp_{4}\times \Sp_{2p}$ is
\begin{equation}\label{sp computation 3}
\begin{pmatrix}\GL_{k_1} & \ast & \ast \\ 0 & \Sp_{2k_2+2k_3+8-2p}\times h^{diag}u & \ast \\ 0 & 0 & (\GL_{k_1})^{\sharp} \end{pmatrix}\times \begin{pmatrix}\GL_{k_2} & \ast & \ast \\ 0 & \Sp_{4+2k_1+2k_3}\times h^{diag} & \ast \\ 0 & 0 & (\GL_{k_2})^{\sharp} \end{pmatrix}
\end{equation}
with 
$$h\in \begin{pmatrix}\GL_{p-2-k_1-k_2-2k_3} & \ast & \ast \\ 0 & \Sp_{2k_3} & \ast \\ 0 & 0 & (\GL_{p-2-k_1-k_2-2k_3})^{\sharp}\end{pmatrix},u\in \begin{pmatrix}I_{p-2-k_1-k_2-2k_3} & 0 & \ast \\ 0 & I_{2k_3} & 0 \\ 0 & 0 & I_{p-2-k_1-k_2-2k_3}\end{pmatrix}.$$
When we future take the Borel orbit of the first component in \eqref{sp comuptation 2} to get the orbits for $Q_1\back \Sp_{2p+4}/\Sp_{2p}\times \Sp_4$, the projection of the stabilizer of each orbit in $Q_1$ to the $\Sp_8$-part of $L_1$ becomes
$$\begin{pmatrix}B_{p-2-k_1-k_2-2k_3} & \ast & \ast \\ 0 & \Sp_{2k_2+2k_3+8-2p}\times \Sp_{4+2k_1+2k_3} & \ast\\ 0 & 0 & (B_{p-2-k_1-k_2-2k_3})^\sharp \end{pmatrix}.$$
When $p-2-k_2-k_2-2k_3\neq 0$, this contains the unipotent radical of a proper parabolic subgroup of $\Sp_8$. Moreover, by our description of the stabilizer in $\Sp_{4}\times \Sp_{2p}$ we know that this unipotent subgroup has trivial intersection with $\Sp_4\times \Sp_{2p}$. As a result, for the corresponding orbits in the double coset $Q\back G/H$, the projection of the stabilizer in $Q$ to $L=\Sp_8\times \Sp_4$ will contain the unipotent radical of a proper parabolic subgroup. 

It remains to consider the case when $p-2-k_2-k_2-2k_3=0$. In this case, the projection of the stabilizer of the orbit in $Q_1$ to the $\Sp_8$-part of $L_1$ becomes
$$\Sp_{2k_2+2k_3+8-2p}\times \Sp_{4+2k_1+2k_3}.$$
If $k_1+k_3=2$, this stabilizer is $\Sp_8$ and has trivial intersection with the $\Sp_4$-part of the group $\Sp_4\times \Sp_{2p}$. As a result, for the corresponding orbits in the double coset $Q\back G/H$, the projection of the stabilizer in $Q$ to $L=\Sp_8\times \Sp_4$ will contain $\Sp_8$ which is not tempered. 

Now there are only three cases left, which are
$$(k_1,k_2,k_3)=(0,p-4,1),\;(1,p-3,0),\;(0,p-2,0).$$
When $(k_1,k_2,k_3)=(0,p-4,1)$, the projection of the stabilizer of the orbit in $Q_1$ to the $\Sp_8$-part of $L_1$ is $\Sp_6\times \Sp_2$. The first component of $\eqref{sp comuptation 2}$ becomes $diag(\GL_{p-4},\Sp_2)$. When we break it into Borel orbits, the stabilizer is of the form $diag(B_{p-4},B_{\Sp_2})$ where $B_{\Sp_2}$ is a Borel subgroup of $\Sp_2$. Combine with \eqref{sp computation 3}, the stabilizer in $\Sp_{4}\times \Sp_{2p}$ is
$$(\Sp_2\times B_{\Sp_2}^{diag})\times \begin{pmatrix}B_{p-4} & \ast & \ast \\ 0 & \Sp_{6}\times B_{\Sp_2}^{diag} & \ast \\ 0 & 0 & (B_{p-4})^{\sharp} \end{pmatrix}$$
As a result, for the corresponding orbits in the double coset $Q\back G/H$, the projection of the stabilizer in $Q$ to $L=\Sp_8\times \Sp_4$ is of the form 
$$(\Sp_6\times \Sp_{2}^{diag})\times (\Sp_{2}^{diag}\times B_{\Sp_2})$$
which is not tempered since $\Sp_6\times \GL_1\subset \Sp_8$ is not tempered.

When $(k_1,k_2,k_3)=(1,p-3,0)$, the projection of the stabilizer of the orbit in $Q_1$ to the $\Sp_8$-part of $L_1$ is $\Sp_6\times \Sp_2$. The first component of \eqref{sp comuptation 2} becomes $diag(\GL_{1},\GL_{p-3})$. When we break it into Borel orbits, the stabilizer is of the form $diag(\GL_1,B_{p-3})$. Combining with \eqref{sp computation 3}, the stabilizer in $\Sp_{4}\times \Sp_{2p}$ is
$$\begin{pmatrix}\GL_1 & \ast & \ast \\ 0 & \Sp_{2} & \ast \\ 0 & 0 & (\GL_1)^{\sharp} \end{pmatrix} \times \begin{pmatrix}B_{p-3} & \ast & \ast \\ 0 & \Sp_{6} & \ast \\ 0 & 0 & (B_{p-3})^{\sharp} \end{pmatrix}$$
As a result, for the corresponding orbits in the double coset $Q\back G/H$, the projection of the stabilizer in $Q$ to $L=\Sp_8\times \Sp_4$ is of the form 
$$(\Sp_6\times \Sp_{2}^{diag})\times \begin{pmatrix}\GL_1 & \ast & \ast \\ 0 & \Sp_{2}^{diag} & \ast \\ 0 & 0 & (\GL_1)^{\sharp} \end{pmatrix}$$
which is not tempered since $\Sp_6\times \Sp_2\subset \Sp_8\times \Sp_2$ is not tempered.

The last case is when $(k_1,k_2,k_3)=(0,p-2,0)$. In this case, the first component of \eqref{sp comuptation 2} is just $\GL_{p-2}$. Hence when we break it into Borel orbits, we only get one orbit. It is easy to see this is the orbit that satisfies Proposition \ref{main prop}(1). This proves Proposition \ref{main prop}.

For Model 21, $(G,H)=(\GL_{2p+2}\times \Sp_{2q+2},\GL_{2p}\times \SL_2\times \Sp_{2q})$ and $(L,H_L)=(\GL_{4}\times \Sp_{4}\times \GL_{1}^{2p+q-3},\GL_2\times \SL_2\times \SL_2\times \GL_{1}^{2p+q-3})$. Let $Q_0$ be the parabolic subgroup of $G$ containing $Q$ whose Levi factor is $L_0=\GL_{2p+2}\times \Sp_{4}\times \GL_{1}^{q-1}$. We first study the double coset $Q_0\back G/H$ and then break each orbit into $Q$-orbits. The double coset $Q_0\back G/H$ is in bijection with the double coset \eqref{sp double coset}. By our computation of the stabilizers for the double coset \eqref{sp double coset}, we know that for all but one orbis in $Q_0\back G/H$, the projection of the stabilizer of $Q_0$ in $L_0$ would contain the mirabolic subgroup of $\Sp_4$ and this mirabolic has trivial intersection with the $\Sp_2$-copy of $H$. When we break each of those orbits into $Q$-orbits, the projection of the stabilizer in $Q$ to $L$ would still contain the mirabolic subgroup of $\Sp_4$ which is not tempered. 

It remains to study the last orbit in $Q_0\back G/H$. For this orbit, the projection of the stabilizer in $Q_0$ to $L_0$ is $\GL_{2p}\times \SL_2\times \SL_2\times \GL_{1}^{q-1}$. When we further break it into $Q$-orbits, it is the same as the double coset $Q\back G/H$ for Model 15. Hence Proposition \ref{main prop} for the current model follows from Proposition \ref{main prop} for Model 15.

For Model 22, $(G,H)=(\GL_{2p+3}\times \Sp_{2q+2},\GL_{2p+1}\times \SL_2\times \Sp_{2q})$ and $(L,H_L)=(\GL_{5}\times \Sp_{4}\times \GL_{1}^{2p+q-3},\GL_3\times \SL_2\times \SL_2\times \GL_{1}^{2p+q-3})$. The proof of Proposition \ref{main prop} is similar to the previous case, the only difference is that we will be using Proposition \ref{main prop} of Model 16 instead of Model 15. We will skip the details here. 

This finishes the proof of Proposition \ref{main prop} for all the models in Table \ref{Table 1} and the proof of Theorem \ref{main theorem}.

\begin{rmk}
In this paper, we consider only one representative of the spherical varieties for each root type in Table \ref{Table 1}. However, it is possible to consider other forms; for example, one might replace the general linear group (resp.\ the special orthogonal group) by the special linear group (resp.\ the Spin group). When working with these alternative forms, the structure of the double coset space $Q \backslash G/H$ may change over a number field (over the algebraic closure the double coset remains the same), as one double coset over algebraic closure may split into multiple (or even infinitely many) orbits over a number field.

Nevertheless, Proposition \ref{main prop} (and hence Theorem \ref{main theorem}) remains valid for two reasons. First, for the double coset in Proposition \ref{main prop}(1), the stabilizer in $H$ is a parabolic subgroup. Since the map $H^1(k, Q_H) \to H^1(k, H)$ is injective for any parabolic subgroup $Q_H$ of $H$, this double coset does not split when passing to a number field. Second, for the double coset in Proposition \ref{main prop}(2), while they may split into multiple double coset, this does not affect the non-temperedness or the parabolically induced type condition, as these properties depend only on the spherical variety over the algebraic closure.
\end{rmk}

\end{document}